\documentclass[12pt]{article}

\usepackage{amsmath,amssymb,amsfonts,amsthm}
\usepackage{enumerate,enumitem}
\usepackage{cite}
\usepackage{mathrsfs}
\usepackage{showlabels}
\usepackage[english]{babel}
\usepackage[autostyle, english = american]{csquotes}
\MakeOuterQuote{"}

\setlist[enumerate,1]{label=(\arabic*)}
\setlist[enumerate,3]{label=(\roman*)}



\newtheorem{thm}{Theorem}[section]
\newtheorem{lem}[thm]{Lemma}

\newtheorem{prop}[thm]{Proposition}


\theoremstyle{definition}
\newtheorem{defn}[thm]{Definition}
\newtheorem*{question}{Question}

\newtheorem*{nota}{Notation}




\newcommand{\dom}[1]{\text{dom}(#1)}

\newcommand{\rest}{\upharpoonright}
\newcommand{\ang}[1]{\langle#1\rangle}
\renewcommand{\phi}{\varphi}


\newcommand{\forces}{\Vdash}



\renewcommand{\O}{\mathcal{O}}
\newcommand{\Deg}{\mathscr{D}}
\newcommand{\Degh}{\Deg_h}
\newcommand{\DegO}{\Deg_h(\leq_h\! \O)}
\newcommand{\omegaoneck}{\omega_1^{\text{CK}}}
\renewcommand{\a}{\textbf{a}}
\renewcommand{\b}{\textbf{b}}
\renewcommand{\c}{\textbf{c}}
\renewcommand{\d}{\textbf{d}}
\newcommand{\e}{\textbf{e}}



\newcommand{\Lat}{\mathscr{L}}
\newcommand{\hLat}{\hat{\Lat}}

\newcommand{\sqleq}{\sqsubseteq}

\newcommand{\join}{\sqcup}
\newcommand{\meet}{\sqcap}
\renewcommand{\Join}{\bigsqcup}

\newcommand{\M}{\mathscr{M}}
\newcommand{\U}{\mathscr{U}}
\newcommand{\V}{\mathscr{V}}

\newcommand{\Latrep}{\{\Theta_i:i\in\omega\}}


\newcommand{\Lang}[1][{}]{\mathcal{L}(\omegaoneck,\texttt{G}^{\texttt{#1}})}
\newcommand{\Model}[1][{}]{\mathcal{M}(\omegaoneck,\mathcal{G}^{#1})}
\newcommand{\concat}{{}^\smallfrown}
\renewcommand{\P}{\mathcal{P}}

\newcommand{\G}{\mathcal{G}}

\newcommand{\X}{\textbf{X}}
\newcommand{\condition}[1][{}]{(\Phi_{#1},\X_{#1})}


\title{The $\Sigma_2$ theory  of $\DegO$ as an uppersemilattice with least and greatest element is decidable}
\author{James Barnes}
\date{}

\begin{document}

\maketitle

\begin{abstract}
	We establish the decidability of the $\Sigma_2$ theory of $\Degh(\leq_h \O)$, the hyperarithmetic degrees below Kleene's $\O$, in the language of uppersemilattices with least and greatest element. This requires a new kind of initial-segment result and a new extension of embeddings result both in the hyperarithmetic setting.
\end{abstract}

\section*{Introduction}
Hyperarithmetic reducibility is as a notion of reduction with connections to Turing reducibility, recursive well-orderings, and definability in second-order arithmetic. A subset $X$ of $\omega$ is \textbf{hyperarithmetic} in a subset $Y$ of $\omega$, $X\leq_h Y$, if there is an ordinal $\delta$ with a $Y$-recursive representation such that $X$ is Turing reducible to the $\delta$th jump of $Y$. (Some care must be taken in defining $Y^{(\delta)}$: $Y^{(0)} = Y, Y^{(\delta+1)} = (Y^{\delta})'$, and if $\{\delta_{n}\}_{n=0}^\infty$ is a $Y$-recursive sequence of $Y$-recursive ordinals, then $Y^{(\lim \delta_n)} = \bigoplus_{n} Y^{(\delta_n)}$. One must show that, up to Turing degree, the definition of $Y^{(\delta)}$ for limit $\delta$ does not depend on the choice of $Y$-recursive cofinal sequence.) Kleene showed that $X$ is hyperarithmetic in $Y$ iff $X$ is $\Delta_1^1$ definable in second-order arithmetic with a predicate for membership in $Y$.

The \textbf{hyperarithmetic degrees} (or hyperdegrees), $\Degh$, is the quotient of $2^\omega$ under hyperarithmetical equivalence: $X\equiv_h Y$ iff $X\leq_h Y$ and $Y\leq_h X$. This degree structure shares many similarities with the Turing degrees, for instance, it is an uppersemilattice with least element: The join operator can be defined (on representatives for degrees) as
	\[
		X \oplus Y = \{2n : n\in X\} \cup \{2n + 1 : n\in Y\},
	\]
and the least element is the degree of the empty set. There is also a notion of jump: the \textbf{hyperjump} of $X$ is $\O^X$, the $\Pi_1^1(X)$ complete set of notations for $X$-recursive ordinals. This operator bears some similarity to the Turing jump operator, which takes a set $X$ and returns a complete $\Sigma^0_1(X)$ set. The reader may be tempted to an analogy between being recursively enumerable in $X$ (which is equivalent to being $\Sigma^0_1(X)$) and being $\Pi_1^1$ in $X$; however, this temptation will lead one astray: They only hyperdegrees with a $\Pi_1^1$ member are the trivial hyperdegree and the hyperdegree of $\O$, the hyperjump of the empty set.

Despite this, $\DegO$, the hyperdegrees less than $\O$, is still an interesting substructure of $\Degh$ in its own right and may be considered analogous to $\Deg_T(\leq_T 0')$, the Turing degrees below $0'$. It is natural to ask questions about what kind of structures embed into $\DegO$, what its initial segments look like, the complexity of its theory, and so on. This paper concerns lattice embeddings of finite lattices into $\DegO$ that takes the top element to $\O$ and everything else to an initial segment, extensions of embeddings of finite uppersemilattices, and an application of these facts to the complexity of the theory of $\DegO$.

Note that, although our discussion so far has concerned subsets of $\omega$, very little is changed by instead considering functions from $\omega$ to itself: One can identify a function from $\omega$ to $\omega$ with its graph, and so with a subset of $\omega$ by some fixed recursive pairing function, and one can identify a subset of $\omega$ with its characteristic function. In the following, we will not observe a distinction between these two perspectives.
	
\section{Decidability}

	\begin{defn}[Lattice and uppersemilattice]
		A \textbf{lattice} 
			\[
				\Lat=(L,\sqleq_\Lat, \meet_\Lat, \join_\Lat, \top_\Lat, \bot_\Lat)
			\]	
		is a structure such that $\sqleq_\Lat$ is a partial order on the set $L$ satisfying the following:
			\begin{itemize}
				\item $\top_\Lat$ is the greatest element of $\Lat$.
				\item $\bot_\Lat$ is the least element of $\Lat$.
				\item Each pair $x,y\in \Lat$ have a greatest lower bound $x\meet_\Lat y$.
				\item Each pair $x,y\in \Lat$ have a least upper bound $x\join_\Lat y$ in $\Lat$.
			\end{itemize}
	
		An \textbf{uppersemilattice} $\U=(U,\sqleq_\U, \join_\U, \bot_\U)$ is like a lattice, but there need not be a greatest element, nor need greatest lower bounds exist.
	
		An \textbf{uppersemilattice with $\top$} is an uppersemilattice with a greatest element $\top_\U$.
		\end{defn}
	
	\begin{nota}
		We will denote lattices and uppersemilattices with calligraphic roman letters $\Lat, \U,\V$ and the like. Elements of lattices (and uppersemilattices) will be denoted with lowercase roman letters from the end of the alphabet: $x,y,z,w$.
		
		Whenever confusion will not arise, we will drop the subscripts on the various parts of the structure, e.g., we will write $\sqleq$ instead of $\sqleq_\Lat$ if it is understood to which structure we are referring. Additionally, we will abbreviate uppersemilattice as USL and uppersemilattice with $\top$ as USL$^\top$.
	\end{nota}

	Note that a finite USL $\U$ is a lattice: its greatest element and meet are given by 
		\[
			\top := \Join_{x\in\U} x \quad \text{ and } \quad x\meet y := \Join_{z\sqleq x,y} z, \quad \text{ respectively}.
		\]
	As $\U$ is finite and has least element $\bot$, each of these joins is nonempty and finite; therefore, they are well defined.

	The satisfiability of a $\Pi_2$ sentence in the language of USL$^\top$s over $\DegO$ can be effectively reduced (in a manner entirely analogous to that in Lerman Theorem VII.4.4 \cite{Lerman1983}) to deciding questions of the following form:
		\begin{question}
			Let $\U$ be a finite USL$^\top$ and let $\V_1,\ldots,\V_n$ be finite USL$^\top$s each a superstructure of $\U$. Then is it the case that every embedding of $\U$ into $\DegO$ extends to an embedding of one of the $\V_i$s?
		\label{keyquestion}
		\end{question}
	Note that the embedding must preserve $\join$, map $\bot$ to the degree of the empty set, and $\top$ to the degree of $\O$.

	To answer the question, we introduce some terminology and state our results.
		\begin{defn}[Almost-initial-segment]
			If $\U$ and $\V$ are USL$^{\top}$s and $\U$ is a substructure of $\V$, then $\U$ is an \textbf{almost-initial-segment} of $\V$ (we also say $\V$ is an \textbf{almost-end-extension} of $\U$) if whenever $u\in \U$ and $v\in \V\setminus\U$ satisfy $v\sqleq u$, then $u = \top$, i.e., the only way an element of $\U$ is above something strictly in $\V$ is if it is the greatest element (in both $\U$ and $\V$).
		\end{defn}

		\begin{thm}
			Let $\Lat$ be a finite lattice. Then there is a lattice embedding $f:\Lat\rightarrow\DegO$ such that the image of $\Lat$ under $f$ is an almost-initial-segment of $\DegO$.
		\label{maintheorem1}
		\end{thm}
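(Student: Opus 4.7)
The plan is to adapt Lerman's tree-of-USL-tables initial-segment method for the Turing degrees to the hyperarithmetic setting, with the twist that the top element of $\Lat$ will be sent to $\deg_h(\O)$ itself rather than being made part of a true initial segment. First I would fix a \emph{lattice representation} of $\Lat$: a system of trees $\{T_x : x \in \Lat\}$ with coordinate projections such that, for any sufficiently generic choice of a path $P_x$ through each $T_x$, the map $x \mapsto \deg_h(P_x)$ preserves $\sqleq$, $\join$, and $\bot$, and any set computed from $P_x$ in a sufficiently uniform way is either of the form $P_y$ for some $y \sqleq x$ or is killed off on a further refinement. Finiteness of $\Lat$ lets one build such a representation $\O$-recursively using the classical finite-lattice machinery.

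Next I would force with $\O$-recursive approximations to the generic: conditions are $\omegaoneck$-bounded refinements of the trees $T_x$, and one meets a uniformly $\Delta^1_1(\O)$ enumeration of dense requirements along $\omegaoneck$. Because every step of the construction stays inside $\Delta^1_1(\O)$, the resulting tuple $(P_x)_{x\in\Lat}$ has each $P_x$ of hyperdegree at most $\deg_h(\O)$. To make $f(\top) \equiv_h \O$, I would code $\O$ directly into $P_\top$ along coordinates that lie above every non-top generator but outside the scope of the initial-segment requirements; the almost-initial-segment condition imposes no restriction below $\top$ beyond what the embedding itself already supplies, so this coding is harmless.

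The heart of the argument is the initial-segment requirement itself: for each $x \in \Lat \setminus \{\top\}$ and each hyperarithmetic reduction $\Gamma$, I include a density requirement forcing either $\Gamma^{P_x}$ to be partial or $\Gamma^{P_x}$ to agree with $P_y$ for some $y \sqleq x$. In the Turing setting this follows from the standard splitting-or-matching dichotomy on trees, but here the dichotomy must be run against every $\Delta^1_1$ reduction rather than every Turing reduction, and the split-or-match witness must remain describable by a condition whose entire content is $\O$-hyperarithmetic. This is the main obstacle. I would attempt to overcome it by stratifying the representation along $\omegaoneck$: the trees $T_x$ carry a layered structure whose $\alpha$th level absorbs information from the $\alpha$th jump of the generic, so that the split-or-match decision at level $\alpha$ can be taken using only data from level $\alpha+1$, keeping the density sets uniformly $\O$-recursive while covering every hyperarithmetic functional. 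This stratified tree construction is, I take it, the \enquote{new kind of initial-segment result} promised by the abstract, and is where the genuine novelty over the Turing-degree case must lie.
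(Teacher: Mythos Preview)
Your sketch misses the central tension that drives the paper's construction: the coding of $\O$ into $\G^\top$ and the initial-segment requirements are \emph{not} independent, and you cannot simply declare the coding ``harmless.'' Since $\Lat$ has coatoms $x,y$ with $x\join y=\top$, the embedding must satisfy $\G^x\oplus\G^y\equiv_h\G^\top$, so $\O$ must be recoverable from $\G^x\oplus\G^y$. There are no ``coordinates above every non-top generator'' into which you can code freely: every bit of $\G^\top$ is determined by the non-top data via the join relations. The paper solves this by building a \emph{coding-ready} sequential representation with distinguished elements $g(x,y,0),g(x,y,1)\in\Theta_0$ satisfying $g(x,y,0)\equiv_x g(x,y,1)$ but $g(x,y,0)\not\equiv_y g(x,y,1)$; a bit of $\O$ is coded by choosing which of the two appears at a given position of $\G$, and the decoding is recursive in $\G^x\oplus\G^y$.

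The hard part is then showing that every density requirement---fusion, diagonalization for $\not\sqleq$, and the splitting-tree construction for the initial-segment property---can be met by an extension that does \emph{no more root-coding} than the current condition. This is exactly where the work lies: the paper introduces ``branch-coding-free'' trees, ``$x$-safe versions'' of strings, and an ``acceptable for $A$'' condition on the representation (property~(5)) precisely so that homogeneity interpolants and splits can be found outside the coding set $C$. Your proposal does not identify this interaction at all. Separately, your ``stratification along $\omegaoneck$ so the $\alpha$th level absorbs the $\alpha$th jump'' is not what happens; the paper uses Sacks's ranked forcing language $\Lang[x]$ and a fusion lemma to decide ranked formulas and preserve $\omegaoneck$, with the splitting-tree argument handling the initial-segment requirement in the standard way once the coding-avoidance machinery is in place.
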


		\begin{thm}
			Let $\U$ and $\V$ be finite USL$^\top$s such that $\U$ is an almost initial segment of $\V$. Then every embedding of $\U$ into $\DegO$ extends to one of $\V$.
		\label{maintheorem2}
		\end{thm}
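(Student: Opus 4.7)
The plan is to fix, for each $u\in\U$, a representative $X_u\subseteq\omega$ with $\deg_h(X_u)=f(u)$, taking $X_\bot=\emptyset$ and $X_\top\equiv_h\O$, and then to produce, for each new element $v\in\V\setminus\U$, a set $Y_v\leq_h\O$. The extension $g$ is then defined by $g(v):=\deg_h\bigl(Y_v\oplus\bigoplus_{u\in\U,\,u\sqleq v}X_u\bigr)$ for $v\in\V\setminus\U$. So defined, $g$ is an embedding extending $f$ provided that (i) whenever $v\sqleq_\V w$, $Y_v$ is uniformly hyperarithmetically coded into $Y_w$, and (ii) whenever $v\not\sqleq_\V w$, $Y_v\not\leq_h Y_w\oplus\bigoplus_{u\sqleq w}X_u$. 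The almost-initial-segment hypothesis is precisely what makes (ii) tractable: for $v\in\V\setminus\U$ and $w\in\U$, $v\sqleq w$ would force $w=\top$, so the non-reductions that must be killed only involve right-hand sides whose $\U$-part omits $X_\top$, and hence omits $\O$-worth of information, leaving diagonalization room.

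I would carry this out by a hyperarithmetic forcing construction in the style of Kumabe--Slaman, with conditions along the lines of $\condition$ in the paper's notation: a finite approximation $\X_i$ to each $Y_v$ together with a finite partial functional $\Phi_i$ recording which putative hyperarithmetic reductions the condition has committed to blocking. Positive requirements, $v\sqleq w$, are handled at the outset by reserving a column of $Y_w$ to copy $Y_v$; this coding is preserved automatically by every extension of the condition. Negative requirements, $v\not\sqleq w$, are met by a density lemma: given the $e$th candidate hyperarithmetic reduction (which is indexed by an ordinal notation, not a mere integer), one extends the condition by modifying $\X_v$ so as to falsify the putative reduction, which is permissible because $v\not\sqleq w$ means we are free to alter the $v$-component without disturbing any column inside the $w$-component.

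The main obstacle is running this forcing uniformly inside $\O$. Hyperarithmetic reductions are indexed by notations rather than by integers, so the diagonalization list has length $\omegaoneck$ and must be navigated by a well-founded recursion that is itself $\O$-recursive; simultaneously, each negative density step must be shown to be uniformly computable from $\O$, and the resulting $Y_v$'s must be verified to be hyperarithmetic in $\O$. This is where the new extension-of-embeddings machinery advertised in the abstract earns its keep: the Turing-degree-style density lemmas of Lerman have to be upgraded to tolerate transfinite uses, and one must check that extending a condition to satisfy one negative requirement never invalidates earlier positive codings or the finitely many $\U$-involving non-reductions already secured. Once this hyperarithmetic density lemma is in place, the verification that $g$ is a USL$^\top$-embedding extending $f$ reduces to a routine case analysis on pairs $(v,w)\in\V\times\V$, splitting on whether each coordinate lies in $\U$ or in $\V\setminus\U$.
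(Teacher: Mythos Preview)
Your proposal has a genuine gap: conditions (i) and (ii) do not suffice to make $g$ a USL$^\top$ embedding, because they do not guarantee \emph{join} preservation. Take $\U=\{\bot,u,\top\}$ linearly ordered and $\V=\U\cup\{v\}$ with $v$ incomparable to $u$ and $u\join_\V v=\top$. Your scheme sets $g(v)$ to the hyperdegree of $Y_v$, and you then need $g(u)\join g(v)$ to equal the degree of $\O$; that is, $Y_v$ must \emph{cup} $X_u$ up to $\O$. Nothing in (i) or (ii) arranges this: column-coding $Y_v$ into $Y_w$ handles $v\sqleq w$ only when $w\in\V\setminus\U$, but when a join lands at $\top\in\U$ there is no $Y_\top$ to code into, and the requirement is a positive cupping fact that must be forced. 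More generally, whenever $v_1\join_\V v_2=w$ you need $g(w)\leq_h g(v_1)\join g(v_2)$, and your setup only delivers the reverse inequality.

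This cupping is exactly what Kumabe--Slaman forcing is for, but your description of the conditions is inverted. In K--S forcing the generic object is the functional $\Phi$ itself (a finite use-monotone Turing functional, extended stage by stage), and $\X$ is a finite set of reals serving as restraints---reals to which $\Phi$ must add no new computations. One cups by inserting axioms $(x,y,\alpha)$ with $\alpha\subset A$ and $B(x)=y$ into $\Phi$, so that $\Phi^A=B$ and hence $B\leq_h A\oplus\Phi$. The paper does not attack a general $\V$ directly: it first proves a structural lemma that every finite almost-end-extension of $\U$ embeds into a \emph{simple} almost-end-extension of a \emph{free} extension of $\U$; free extensions are then handled by hyperarithmetic Cohen forcing, and simple extensions via a bounded Posner--Robinson theorem (find $g<_h\O$ with $\b\leq_h\a\join g$, $\d_i\not\leq_h\c_i\join g$, and $g\not\leq_h\e_j$) proved by K--S forcing run hyperarithmetically in $\O$. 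Reducing to a single new generator is what makes the competing cupping and non-cupping requirements tractable.
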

	Given these results our aforementioned question can be answered by determining whether any of the $\V_i$s are almost-end-extensions of $\U$, which we can answer in a uniform and recursive manner: If $\V_i$ is an almost-end-extension of $\U$, then Theorem \ref{maintheorem2} provides an extension of any embedding of $\U$ into $\DegO$ to one of $\V_i$. On the other hand, if no $\V_j$ is an almost-end-extension of $\U$, then Theorem \ref{maintheorem1} provides an embedding of $\U$ into $\DegO$ as an almost-initial-segment, which can not extend to any of the $\V_j$. 
	
	The major project for the rest of this paper is to establish Theorems \ref{maintheorem1} and \ref{maintheorem2}. We proceed with Theorem \ref{maintheorem1}.
	
\section{Almost-initial-segments of $\DegO$}

Recall that a finite USL$^\top$ is a lattice. An arbitrary embedding of a USL$^\top$ into $\DegO$ need not preserve the meet structure; however, if the image of our embedding is an almost-initial-segment, then the meet structure will be preserved. Consequently, while our discussion concerns almost-initial-segments of $\DegO$, there is no loss of generality in considering lattices.

Kjos-Hanssen and Shore\cite{KjoshanssenShore2010} have produced initial segment embeddings of every sublattice of every hyperarithmetic lattice, a class which contains all finite lattices. We combine their forcing construction with ideas from Lerman and Shore\cite{LermanShore1988} to code $\O$ into the top real we construct, while preserving the lattice structure in such a way that everything except $\top$ is mapped into some initial segment.

Let $\Lat$ be a finite lattice and let $A$ be the set of coatoms of $\Lat$, i.e.,
	\[
		A = \{x\in \Lat : x\sqsubset\top\text{ and there is no $y\in\Lat$ s.t. } x \sqsubset y \sqsubset 1\}.
	\]
Let $f$ be an embedding of $\Lat$ into $\Degh$ as an initial-segment, as provided by Kjos-Hanssen and Shore \cite{KjoshanssenShore2010}. As our lattice is finite, it is recursive; therefore, we can choose such an $f$ with takes $\top$ to a degree below that of $\O$. We define a map $\tilde{f}:\Lat\rightarrow\DegO$ by
	\[
		\tilde{f}(x) = \begin{cases}
			f(x)&\text{if $x\neq \top$},\\
			\text{degree}(\O)&\text{if $x = \top$.}
		\end{cases}
	\]
This map will not, in general, be a lattice embedding of $\Lat$, as we may no longer preserve the join structure. However, if $|A|<2$ then there are no $x,y\in\Lat$ such that $x,y<\top$ yet $x\join y = \top$. In this case, $\tilde{f}$ will be a lattice embedding, and, by choice of $f$, will be an almost-initial-segment embedding of $\Lat$ into $\DegO$. (Indeed, we could send $\top$ to any degree above $f(\top)$ and still have a lattice embedding where the image of $\Lat\setminus\{\top\}$ is an initial segment). Henceforth, we assume that $|A|\geq 2$.

The rest of this section approximately follows the structure of Kjos-Hanssen and Shore\cite{KjoshanssenShore2010} with the additional concern of coding $\O$ into the image of the top element of a lattice. Our notion of forcing is rather simpler than theirs (because we are only concerned with finite lattices), but whenever we wish to meet a dense set of a particular kind, we need to show we can do so without interfering with our coding procedure (which we are yet to define).

\subsection{Lattice representations}

To define our notion of forcing for almost-initial-segments, we require a strong kind of representation of $\Lat$.

\begin{defn}[USL table]
	A set $\Theta$ of maps from $\Lat$ to $\omega$ is an \textbf{USL table} for $\Lat$ if it has the following properties:
	\begin{enumerate}
		\item(nontriviality of $\Theta$) The zero map $x \mapsto 0$ is in $\Theta$ (we denote this map by $0$ as well).
		\item($\bot$ is trivial) For every $\alpha\in \Theta$, $\alpha(\bot) = 0$.
	\end{enumerate}
	And for every choice $x,y,z\in \Lat$:
	\begin{enumerate}[resume]
		\item(Order) If $x\sqleq y$, and $\alpha,\beta\in \Theta$ satisfy $\alpha(y) = \beta(y)$, then $\alpha(x) =\beta(x)$.
		\item(Differentiation) If $x\not\sqleq y$, then there are $\alpha,\beta\in\Theta$ such that $\alpha(y)=\beta(y)$ yet $\alpha(x)\neq \beta(x)$.
		\item(Join) If $x\join y = z$ and $\alpha,\beta\in \Theta$ satisfy $\alpha(x) = \beta(x)$ and $\alpha(y)= \beta(y)$, then $\alpha(z)=\beta(z)$.
	\end{enumerate}
\end{defn}

\begin{nota}
	We will denote lattice tables by $\Theta,\Theta_1,\Theta_2$ and so on, and their elements will be denoted by lowercase Greek letters $\alpha,\beta$ and $\gamma$.
	
	For $x\in \Lat$ and $\alpha,\beta\in \Theta$ members of an USL table for $\Lat$, we write $\alpha\equiv_{x} \beta$ if $\alpha(x)=\beta(x)$, which is clearly an equivalence relation on $\Theta$. We write $\alpha\equiv_{x,y}\beta$ to mean that $\alpha\equiv_{x} \beta$ and $\alpha \equiv_{y} \beta$.
	
	We extend this notation to partial functions (and so, in particular, to strings) by declaring $f \equiv_{x} g$ if wherever $f,g$ are both defined their values agree modulo $x$.
\end{nota}

\begin{defn}[Sequential lattice representation]
	A nested sequence $\{\Theta_i : i\in \omega\}$ of finite USL tables for $\Lat$ is a \textbf{sequential (lattice) representation} for $\Lat$ if for every choice of $x,y,z\in\Lat$ and $i\in \omega$:
	\begin{enumerate}
		\item If $x\meet y = z$, then there are \textbf{meet interpolants} for $\Theta_i$ in $\Theta_{i+1}$, i.e., if $\alpha,\beta\in \Theta_i$ and $\alpha\equiv_{z} \beta$, then there are $\gamma_0,\gamma_1,\gamma_2\in \Theta_{i+1}$ such that 
			\[
				\alpha \equiv_{x} \gamma_0 \equiv_{y} \gamma_1 \equiv_{x} \gamma_2 \equiv_{y} \beta.
			\]
		\item There are \textbf{homogeneity interpolants} for $\Theta_i$ in $\Theta_{i+1}$, i.e., for all $\alpha_0,\alpha_1,\beta_0,\beta_1 \in \Theta_i$ such that
		\[
		\left(\forall x\in \Lat\right)[\alpha_0\equiv_x \alpha_1 \rightarrow \beta_0 \equiv_x \beta_1]
		\]
		there are $\gamma_0,\gamma_1\in \Theta_{i+1}$ and \textbf{$\Lat$-homomorphisms} $f,g,h:\Theta_i\rightarrow \Theta_{i+1}$ such that
		\[
		f:\alpha_0,\alpha_1\mapsto \beta_0, \gamma_1, \quad g:\alpha_0,\alpha_1\mapsto \gamma_0,\gamma_1, \quad h:\alpha_0,\alpha_1\mapsto \gamma_0,\beta_1.
		\]
		($f$ is an $\Lat$-homomorphism from $\Theta_i$ to $\Theta_{i+1}$ if for each $\alpha,\beta\in\Theta_i$ and each $x\in\Lat$ if $\alpha\equiv_x \beta$, then $f(\alpha)\equiv_x f(\beta)$.)
	\end{enumerate}
\end{defn}

The above definition is a simplification of the representation given in Theorem 5.1 of Kjos-Hanssen and Shore\cite{KjoshanssenShore2010}. Ours is simpler because our lattices are finite, and so we do not need to approximate our lattice with a growing sequence of finite lattices. Using such a representation, we could embed $\Lat$ as an initial segment of $\Degh$, or even $\DegO$; however, we would have insufficient control over the image of $\top$. We introduce apparatus that allows us to code $\O$ into the image of $\top$:

\begin{defn}[Coding-ready representation]
	A sequential representation $\{\Theta_i : i\in \omega\}$ has $C\subseteq \Theta_0$ as a \textbf{coding set} if there is a bijective map $g$ from 
		\[
			\left\{\ang{x,y,k}:  k\in\{0,1\},\, x\join y =\top,\, \text{and}\ x,y\neq \top\right\}
		\] 
	to $C$ such that:
	\begin{enumerate}\setcounter{enumi}{2}
		\item For every $x,y\in\Lat\setminus\{\top\}$ if $x \join y = \top$, then
			\[
				g(x,y,0) \equiv_x g(x,y,1) \text{ and } g(x,y,0)\not\equiv_y g(x,y,1).
			\]
		\item For all $\alpha\in C$ and all $x\in \Lat\setminus\{\top\}$, there is a $\beta\in \Theta_0\setminus C$ such that $\alpha\equiv_x\beta.$
	\end{enumerate}
	
	The table is \textbf{acceptable for $A$} (the set of coatoms of $\Lat$) if for each $i > 0$ there is a subset $\Theta^\ast_i$ of $\Theta_i$, containing $\Theta_{i-1}$ such that (1) holds for $\Theta^\ast_{i+1}$ in place of $\Theta_{i+1}$, (2) holds for $\Theta^\ast_{i+1}$ in place of $\Theta_i$, and further, (in the notation of (2)) if $f(\alpha)\in C$, then $\alpha=\alpha_0$ or $\alpha=\alpha_1$ (and the same for $g$ and $h$), and, finally: 
	\begin{enumerate}[resume]
		\item For all $x\in A$, all $i\in\omega$, and all $\alpha_0,\alpha_1\in\Theta_i$ there exists $\beta_0,\beta_1\in \Theta_{i+1}^\ast\setminus \Theta_i$ such that
		\[
		\alpha_0\equiv_x\beta_0,\, \alpha_1 \equiv_{x} \beta_1,\,\text{and}\ (\forall y\in\Lat)[\alpha_0\equiv_{y} \alpha_1\rightarrow \beta_0\equiv_{y} \beta_1].
		\]
	\end{enumerate}
	If $\{\Theta_i : i\in\omega\}$ has coding set $C$, the differentiation property of USL tables is satisfied outside of $C$ (i.e., for each $x\not\sqleq y$ there are $\alpha,\beta\in\Theta_0\setminus C$ such that $\alpha\equiv_{y} \beta$ yet $\alpha \not\equiv_{x} \beta$), and it is acceptable for $A$, we call it \textbf{coding-ready}.
\end{defn}

We will construct a recursive coding-ready sequential representation $\{\Theta_i : i\in\omega\}$ for $\Lat$ shortly. Firstly, we motivate and explain the definition: our representation will be nested as displayed
	\[
		C \subsetneq \Theta_0 \subsetneq \Theta_1^\ast \subsetneq \Theta_1 \subseteq \Theta_2^\ast \subsetneq \Theta_2 \subsetneq \cdots
	\]
Each $\Theta_i$ will be a USL table for $\Lat$; and we can find meet interpolants for elements of $\Theta_i$ inside $\Theta_{i+1}^\ast$, and we can find homogeneity interpolants for $\Theta_{i+1}^\ast$ in $\Theta_{i+1}$. The elements of $C$ are special, and they indicate we are coding. (3) tells us that for each $x,y\in\Lat$ joining up nontrivially to $\top$ that there is a pair of unique coding elements. (4) and the fact that we can satisfy the differentiation property outside of $C$ tells us that we can replace a coding element with an element that does not code and still preserve a congruence. (5) in the definition of acceptable for $A$ tells us that if we can find a split (this will be defined later), then we can find a split not using coding elements, and the requirement that $f,g,$ and $h$ only take on coding values when entirely necessary (i.e. when $\beta_0\in C$ or $\beta_1\in C$) allows us to find homogeneity interpolants which do no coding.

\begin{thm}
	Let $\Lat$ be a finite lattice with at least two coatoms and $A$ be the set of coatoms of $\Lat$. Then there is a recursive coding-ready sequential lattice representation for $\Lat$.\label{latticerepexistence}
\end{thm}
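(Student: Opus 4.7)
The plan is a staged construction: I will define $\Theta_0$ and its coding set $C$ explicitly, then inductively build $\Theta_{i+1}^\ast$ and $\Theta_{i+1}$ from $\Theta_i$. The skeleton is the sequential-representation construction of Kjos-Hanssen and Shore \cite{KjoshanssenShore2010}, which becomes combinatorially simpler in the finite-lattice case, and the novelty is to freeze the coding set $C$ at stage $0$ and then add only non-coding elements at every later stage, so that the acceptability conditions in the definition of coding-ready are automatically met.

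For the base case I would build $\Theta_0$ explicitly. For each triple $(x,y,k)$ with $x \join y = \top$, $x, y \neq \top$, and $k \in \{0,1\}$, introduce a fresh function $g(x,y,k):\Lat\to\omega$, stipulating that $g(x,y,0)$ and $g(x,y,1)$ agree on every join-irreducible $p \sqleq x$ while disagreeing on some join-irreducible $q \sqleq y$ with $q \not\sqleq x$ (such $q$ exists because $x \neq \top$ forces $y \not\sqleq x$). This yields property (3). I then add the zero map, a supply of non-coding maps witnessing the differentiation property outside $C$, and for each $\alpha \in C$ and each $u \sqsubset \top$ a non-coding companion $\beta \equiv_u \alpha$, yielding property (4); such $\beta$ exists because matching $\alpha$ on $u$ only constrains the values of $\beta$ at elements $\sqleq u$, leaving the remaining coordinates free to take non-coding values.

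For the inductive step I would close $\Theta_i$ into $\Theta_{i+1}^\ast$ by adding meet interpolants for every $\alpha,\beta \in \Theta_i$ with $\alpha \equiv_z \beta$ (where $z = x \meet y$) together with, for each coatom $x$ and each pair $\alpha_0,\alpha_1 \in \Theta_i$, a splitting pair $\beta_0,\beta_1$ required by condition (5); then close further into $\Theta_{i+1}$ by adjoining homogeneity interpolants of $\Theta_{i+1}^\ast$. Each newly added element can be chosen outside $C$: since $C$ is a fixed finite set of specific functions and the congruence constraints imposed by an interpolation only fix finitely many coordinates, the remaining coordinates can always be given values that avoid any $g(x,y,k)$.

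The main obstacle is the homogeneity-interpolant step, where one must simultaneously produce $\gamma_0,\gamma_1$ and three $\Lat$-homomorphisms $f,g,h$ whose values on $\alpha_0,\alpha_1$ are prescribed and which map no non-designated element into $C$. The danger is a congruence $\alpha \equiv_\top \alpha_0$ combined with $\beta_0 \in C$: it forces $f(\alpha) \equiv_\top \beta_0$, which could drag $f(\alpha)$ into $C$. I would resolve this by, whenever $\beta_0 \in C$, choosing a non-coding surrogate $\beta_0' \in \Theta_{i+1}$ agreeing with $\beta_0$ on precisely those $u \in \Lat$ at which some $\delta \in \Theta_{i+1}^\ast \setminus \{\alpha_0\}$ satisfies $\alpha_0 \equiv_u \delta$; such a $\beta_0'$ exists because these are finitely many coordinate-equality constraints, all compatible with $\beta_0'$ differing from every $g(x',y',k')$ elsewhere. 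I then run the standard homogeneity construction with $\beta_0'$ substituted for $\beta_0$ to obtain $\gamma_0,\gamma_1$ and a homomorphism $f'$, and finally set $f(\alpha_0) = \beta_0$ and $f(\alpha) = f'(\alpha)$ for $\alpha \neq \alpha_0$, with analogous treatment of $h$ when $\beta_1 \in C$. The resulting $f,g,h$ preserve all congruences by construction and obey the acceptability restriction. All tables remain finite and every choice at every stage is a bounded combinatorial search, so the enumeration $i \mapsto \Theta_i$ is recursive uniformly in $\Lat$.
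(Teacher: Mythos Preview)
Your overall architecture --- build $\Theta_0$ together with a fixed coding set $C$, then inductively adjoin meet interpolants, the splitting pairs required by condition~(5), and finally homogeneity interpolants, always keeping new elements outside $C$ --- matches the paper's construction, which delegates the first stages to Lerman--Shore and the homogeneity step to Kjos-Hanssen--Shore Proposition~5.6.

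The gap is in your handling of the homogeneity step. Your surrogate argument asserts that whenever $\beta_0\in C$ one can find a non-coding $\beta_0'$ agreeing with $\beta_0$ at every $u$ for which some $\delta\neq\alpha_0$ in $\Theta_{i+1}^\ast$ satisfies $\alpha_0\equiv_u\delta$. But this set of $u$'s can easily contain two coatoms $x_1,x_2$ with $x_1\join x_2=\top$: indeed, condition~(5) itself (applied at the previous stage) guarantees for every coatom $x$ an element $\delta\in\Theta_{i+1}^\ast\setminus\Theta_i$ with $\alpha_0\equiv_x\delta$, so this happens for \emph{every} $\alpha_0\in\Theta_i$. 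Once $\beta_0'\equiv_{x_1}\beta_0$ and $\beta_0'\equiv_{x_2}\beta_0$ are both required, the join property of USL tables forces $\beta_0'\equiv_\top\beta_0$, hence $\beta_0'=\beta_0\in C$, and your surrogate does not exist. Your justification (``finitely many coordinate-equality constraints, all compatible with $\beta_0'$ differing elsewhere'') treats the coordinates as independent and overlooks exactly this join constraint.

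The paper sidesteps this entirely: an inspection of the Kjos-Hanssen--Shore homogeneity construction shows that $f(\alpha)\notin\Theta_{i+1}^\ast$ whenever $\alpha\neq\alpha_0$ (and likewise for $g,h$). Since $C\subseteq\Theta_0\subseteq\Theta_{i+1}^\ast$, this already gives $f(\alpha)\notin C$ for free --- no surrogate is needed. So your fix is both unnecessary (if you use their construction) and, as written, incorrect (if you do not). Replace the surrogate argument with this observation and the rest of your outline goes through.
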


Lerman and Shore\cite{LermanShore1988} construct a sequential representation which is very similar to ours; the main difference is in the homogeneity interpolants. In our notation the $\Lat$-homomorphisms $f,g,h$ in Lerman and Shore act on $\alpha_0,\alpha_1$ as follows:
	\[
		f:\alpha_0,\alpha_1\mapsto \beta_0,\gamma_0,\quad g:\alpha_0,\alpha_1\mapsto \gamma_0,\gamma_1, \quad h:\alpha_0,\alpha_1 \mapsto \gamma_1,\beta_1.
	\]
In particular, $f(\alpha_1) = \gamma_0$ and $h(\alpha_0) = \gamma_1$ instead of $\gamma_1$ and $\gamma_0$, respectively, which is what we required in (2) of the definition of sequential representation. (There is also a difference in the coding set; Lerman and Shore have coding elements for each unordered pair $\{x,y\}$ and we have them for each ordered pair $\ang{x,y}$. The reason for this difference is purely notational, and does not present any mathematical difficulties.)

In their construction Lerman and Shore begin with a finite USL table $\Theta$ for $\Lat$ and then construct a finite USL table extension $\Theta_0$ of $\Theta$ and observe that $\Theta_0$ contains a coding set $C$ disjoint from $\Theta$ satisfying properties (3) and (4). Furthermore, as we started with a USL table $\Theta$, the differentiation property is satisfied outside $C$, as required.

Then they proceed inductively: Given $\Theta_i$, by Lerman Appendix B.2.6 \cite{Lerman1983}, there is a finite USL table $\Theta_i^1$ which contains meet interpolants for $\Theta_i$. They then argue that $\Theta_i^1$ can be extended to a finite USL table $\Theta_i^\ast$ satisfying (5), and as $\Theta_i\subseteq \Theta_i^1\subsetneq \Theta_{i+1}^\ast$ we can find meet interpolants for $\Theta_i$ in $\Theta_{i+1}^\ast$. All of this is uniformly recursive.

Consequently, all we need to show is that given $\Theta_{i+1}^\ast$ a finite USL table for $\Lat$ that we can find (uniformly and recursively) a finite USL table extension $\Theta_{i+1}$ of $\Theta_{i+1}^\ast$ such that we can find homogeneity interpolants for $\Theta_{i+1}^\ast$ in $\Theta_i$ and the $\Lat$-homomorphisms avoid the coding set (unless $\beta_0\in C$ or $\beta_1\in C$).

Kjos-Hanssen and Shore \cite{KjoshanssenShore2010} have already completed this work for us: Their Proposition 5.6 (taking $\hLat = \Lat$) says that we can find such a USL table extension which has homogeneity interpolants of the kind we need (again uniformly and recursively). An examination of their proof shows that if $\alpha\in \Theta_{i+1}^\ast$ then $f(\alpha)\notin \Theta_{i+1}^\ast$ unless $\alpha = \alpha_0$, and so $f(\alpha)=f(\alpha_0) = \beta_0$, and similarly for $g$ and $h$. Hence applying this Proposition allows us to continue our induction, and completes the construction.

\subsection{Perfect trees and forcing}

From here onward we fix a finite lattice $\Lat$ with a set of coatoms $A$ of cardinality at least two and fix a recursive coding-ready sequential representation $\{\Theta_i : i\in\omega\}$ for $\Lat$.

\begin{defn}[Uniform tree]
	A \textbf{uniform tree} for the representation $\{\Theta_i : i\in\omega\}$ is a function $T$ with both domain and range the set of all strings $\sigma$ such that if $\sigma(n)$ is defined, then $\sigma(n)\in \Theta_n$, which
	satisfies the following properties for all $\sigma,\tau\in\dom{T}$:
	\begin{enumerate}
		\item (Order) If $\sigma\subseteq \tau$, then $T(\sigma)\subseteq T(\tau)$.
		\item (Nonorder) If $\sigma | \tau$, then $T(\sigma) | T(\tau)$. In fact, we require that for each length $l$ there is a string $\pi$ such that if $|\sigma| = l$ and $\alpha\in\Theta_{l}$ (so that $\sigma\concat\alpha\in\dom{T}$), then $T(\sigma\concat\alpha)\supseteq T(\sigma)\concat \pi \concat \alpha$.
		\item (Uniformity) For every fixed length $l$, there is a string $\pi_l$ and for every $\alpha\in \Theta_{l}$, there is a string $\rho_{l,\alpha}$ whose length does not depend on $\alpha$ such that if $|\sigma| = l$, then $T(\sigma\concat \alpha) = T(\alpha)\concat\pi_l\concat\rho_{l,\alpha}$. Note, we require that $\pi_l$ and $\rho_{l,\alpha}$ only depend on the length of $\sigma$.
	\end{enumerate}
	
	We say $T$ is \textbf{branch-coding-free} if for every length $l$ and every $\alpha \in \Theta_l$, the string $\pi_l\concat\rho_{l,\alpha}$, which is the extension corresponding to $\alpha$ at this level, does not do \textbf{unnecessary coding}, i.e., for each $j$ if $(\pi_l\concat\rho_{l,\alpha})(j)\in C$ then $j = |\pi_l|$ (and so $\alpha \in C$). (This means that the only time a member $\alpha$ of our coding set $C$ appears on a branch is when we are at a fork and we took the path corresponding to $\alpha$).
	
	The tree $T$ is \textbf{congruence-respecting} if for every length $l$, every $\alpha,\beta\in \Theta_l$, and every $x\in\Lat$ if $\alpha\equiv_{x}\beta$ then $\pi_l\concat \rho_{l,\alpha} \equiv_x \pi_l \concat \rho_{l,\beta}$ (or, equivalently, $\rho_{l,\alpha} \equiv_x \rho_{l,\beta}$).
\end{defn}

Our trees are related to those in Definition 2.4 of Kjos-Hanssen and Shore \cite{KjoshanssenShore2010}. We are afforded some simplifications, again, because our lattice is finite; we can preserve all congruences all the time, and have the same domain for all our trees. Our nonorder (and, consequently, uniformity) property are different in that we have the string $\pi$ which every branch at a level has to follow before splitting. This is a technical requirement that allows us to prove the fusion lemma (Kjos-Hanssen and Shore also need this modification, their fusion lemma, as written, does not produce a forcing condition). The requirement that the trees are branch-coding-free is new, and allows us to code.

\begin{nota}
		Uniform trees will be denoted by uppercase Roman letters, most frequently $T,S,R$, and we denote the set of branches of $T$ by $[T]$. Strings of members of the lattice representation will be denoted $\sigma,\tau,\rho,\nu$ and so on, we reserve $\pi$ for the $\pi$ in the uniformity property. We write the concatenation of $\sigma$ by $\tau$ as $\sigma\concat \tau$ and we confuse a string of length one with its value. So, for instance, we may write $\sigma\concat\alpha$ for $\sigma\in\prod_{i=0}^n \Theta_i$ and $\alpha\in\Theta_{n+1}$. 
	
		For technical reasons, we define the \textbf{height} of a level $l$ to be $|T(\sigma)\concat \pi_l|$ where $\sigma$ is any string of length $l$, and $\pi_l$ is the string as in the definition of the uniformity property. By uniformity, this is independent of the choice of $\sigma$ and so is well defined.
\end{nota}

Hyperarithmetic, branch-coding-free, congruence-respecting, uniform trees will be the conditions of our notion of forcing for producing almost-initial-segments. Observe that the identity tree satisfies all these properties.

\begin{defn}[Subtree]
	A uniform tree $S$ is a \textbf{subtree} of a uniform tree $T$ ($S\subseteq T$) if the range of $S$ is contained in the range of $T$.
\end{defn}

We single out two operations on uniform trees.

\begin{defn}
	If $T$ is a uniform tree and $\sigma\in \prod_{i=0}^l \Theta_i$ for some $l$ then we define $T_\sigma$  by:
	\[
	T_\sigma(\tau) = T(\sigma\concat \tau).
	\]	
	If $\mu \in \prod_{i=0}^l \Theta_i$ for some $l$ and $l\leq |T(\emptyset)|-1$ then  the \textbf{transfer tree of $T$ over $\mu$} ($T^\mu$) is the tree such that $T^\mu(\sigma)$ is the string $T(\sigma)$ but with its initial segment of length $l$ is replaced by $\mu$. (i.e., you change the root of $T$ by replacing the initial-segment of the right length by $\mu$). We write $T_\sigma^\mu$ for $(T_\sigma)^\mu$.
\end{defn}

\begin{prop}
	Let $T$ be a uniform tree. Then $T_\sigma$ and $T^\mu$ are uniform trees whenever they are defined. Furthermore, if $T$ is branch-coding-free, congruence-respecting, or hyperarithmetic, then $T_\sigma$ and $T^\mu$ are, correspondingly, branch-coding-free, congruence-respecting, or hyperarithmetic. Finally, $T_\sigma$ is a subtree of $T$ whenever it is defined, and $(T_\sigma)_\tau = T_{\sigma\concat\tau}$.
\end{prop}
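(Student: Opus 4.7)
The plan is to verify each clause directly from the definitions, exploiting the fact that both operations are designed to shift or relabel the root of a uniform tree while leaving the splitting data (the strings $\pi_l$ and $\rho_{l,\alpha}$ governing nonorder, uniformity, branch-coding-freedom, and congruence-respecting) essentially unchanged. Because all the relevant properties are local statements about these splitting strings, once we identify the splitting strings of $T_\sigma$ and $T^\mu$ in terms of those of $T$, everything follows by bookkeeping.

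For $T_\sigma$, the order property is immediate since $\tau_1 \subseteq \tau_2$ implies $\sigma\concat\tau_1 \subseteq \sigma\concat\tau_2$. For nonorder and uniformity, the splitting data of $T_\sigma$ at level $l$ is exactly that of $T$ at level $|\sigma|+l$: I would take $\pi^{T_\sigma}_l := \pi^{T}_{|\sigma|+l}$ and $\rho^{T_\sigma}_{l,\alpha} := \rho^{T}_{|\sigma|+l,\alpha}$, and then the required identity $T_\sigma(\tau\concat\alpha) = T_\sigma(\tau)\concat\pi^{T_\sigma}_l\concat\rho^{T_\sigma}_{l,\alpha}$ is just a restatement of uniformity for $T$ on $\sigma\concat\tau\concat\alpha$. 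This transports branch-coding-freedom and the congruence-respecting property across the level shift with no change. Hyperarithmeticity follows from the uniform recursive definition $\tau\mapsto\sigma\concat\tau$. Finally, $\mathrm{range}(T_\sigma)\subseteq \mathrm{range}(T)$ is immediate, giving $T_\sigma\subseteq T$, and $(T_\sigma)_\tau(\rho)=T(\sigma\concat\tau\concat\rho)=T_{\sigma\concat\tau}(\rho)$ is a one-line computation.

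For $T^\mu$, the key observation is that the replacement only modifies the common root initial segment of length $|\mu|$ and leaves every suffix alone. Concretely, for $|\sigma|=l$ one has
\[
T^\mu(\sigma\concat\alpha) \;=\; T^\mu(\sigma)\concat \pi^{T}_l \concat \rho^{T}_{l,\alpha},
\]
since the appended extension $\pi^{T}_l\concat\rho^{T}_{l,\alpha}$ lies strictly beyond the prefix being rewritten. Thus the same $\pi_l$ and $\rho_{l,\alpha}$ serve as splitting data for $T^\mu$, and so nonorder, uniformity, branch-coding-freedom, and congruence-respecting are inherited verbatim; order holds because prefix relations are preserved under uniform replacement of a common initial segment; hyperarithmeticity is again clear from the uniform definition.

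The one subtlety worth checking carefully — and the place where the hypothesis is doing work — is that the bound $l\leq |T(\emptyset)|-1$ really does guarantee the rewritten prefix is strictly inside the root, so that no splitting extension $\pi^{T}_k\concat\rho^{T}_{k,\alpha}$ is ever clipped or altered. This is exactly what makes the argument of the previous paragraph valid; beyond this, the proof is entirely routine.
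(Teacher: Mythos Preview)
Your verification is correct and is precisely the routine check the paper has in mind; indeed, the paper states this proposition without proof, treating it as immediate from the definitions. Your identification of the splitting data $\pi^{T_\sigma}_l = \pi^T_{|\sigma|+l}$ and $\rho^{T_\sigma}_{l,\alpha} = \rho^T_{|\sigma|+l,\alpha}$ (restricted to $\alpha\in\Theta_l\subseteq\Theta_{|\sigma|+l}$, which is fine since the tables are nested), and your observation that $T^\mu$ shares the same $\pi_l,\rho_{l,\alpha}$ as $T$ because the rewrite lies strictly inside the root, are exactly the points that make the proposition go through.
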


\begin{defn}[Perfect forcing]
	 \textbf{$\{\Theta_i:i\in\omega\}$-perfect forcing} is the set $\P_{\{\Theta_i:i\in\omega\}} = \P$ of all hyperarithmetic, branch-coding-free, congruence-respecting, uniform trees ordered by the subtree relation, i.e., for $T,S\in\P$ we say $T$ extends $S$ ($T$ refines $S$), $T\leq_\P S$, if $T$ is a subtree of $S$.
\end{defn}

Now we have our notion of forcing, we can discuss the objects its conditions approximate. Clearly, for each length $l$ the set $\{T\in \P : |T(\emptyset)|>l \}$
is dense in $\P$. Consequently, a descending sequence of conditions $\{T_i\}_{i=0}^\infty$ meeting these dense sets will correspond to an object $\G$, an element of the product $\prod_{i=0}^\infty \Theta_i$, defined by $\G(n) = \alpha$ iff there is an $i$ such that $T_i(\emptyset)(n)\downarrow = \alpha$. For each $x\in \Lat$ we define
	\[
		\G^x(n) = \G(n)(x),
	\]
an element of $\omega^\omega$. Our embedding will take $x\in\Lat$ to the degree of $\G^x$. To ensure that $\G^\top\geq_h \O$ we need to do some coding.

\begin{defn}[Coding]
	Fix $x,y\in \Lat$ which join up nontrivially to $\top$ (i.e., $x \join y = \top$ and $x,y\neq \top)$. The \textbf{root-coding of $T$ for $\ang{x,y}$} is the number of occurrences of $g(x,y,0)$ and $g(x,y,1)$ in the string $T(\emptyset)$. A subtree $S$ of $T$ does \textbf{no more root-coding than $T$ for $\ang{x,y}$} if the root-coding of $T$ for $\ang{x,y}$ is the same as that for $S$ and it does \textbf{no more root-coding} if it does no more root-coding for each pair $\ang{x,y}$ which join up nontrivially to $\top$.
\end{defn}

Given $\G^x\oplus \G^y$ our decoding procedure is as follows:
	\begin{quote}
		On input $n$ search for the $n$th number $m$ such that $\G(m) \equiv_x g(x,y,0)$ and either $\G(m) \equiv_y g(x,y,0)$ or $\G(m) \equiv_y g(x,y,1)$. If $\G(m) \equiv_y g(x,y,0)$ we say $n$ is not in the set, and if $\G(m) \equiv_y g(x,y,1)$ then we say $n$ is in the set.
	\end{quote}
(Note that $g$ is the function in the definition of coding set, and that the decoding procedure for $\G^x \oplus \G^y$ depends on the order of $x$ and $y$.)

This procedure is clearly recursive in $\G^x\oplus \G^y$ as $\{\Theta_i:i\in\omega\}$ is recursive and to determine whether $\G(m)\equiv_z \alpha$ it suffices to know $\G^z(m)$. Notice that if $\alpha \equiv_x g(x,y,0)$ (and so, automatically, is equivalent mod $x$ to $g(x,y,1)$) and $\alpha\equiv_y g(x,y,0)$, then $\alpha \equiv_\top g(x,y,0)$, by the join property of USL tables, and so $\alpha = g(x,y,0)$ by the order properties of USL tables, and similarly for $g(x,y,1)$. 

Hence, to ensure that our decoding procedure gives the characteristic function of some set $X$, it suffices to construct a $\G$ such that, for each $n$, at the $n$th place where $G(m)= g(x,y,0)$ or $G(m) = g(x,y,1)$, then at that place it is $g(x,y,0)$ iff $n\notin X$ and is $g(x,y,1)$ iff $n\in X$. Therefore, a running theme for the remainder of this section will be meeting dense sets of various kinds without increasing the root-coding of a condition.

\subsection{The forcing relation}

With this strategy in mind we must define our language of forcing and our forcing relation, and show we can provide a sufficient degree of genericity without interfering with the coding procedure.

\begin{defn}[Languages and models]
	For each $x\in \Lat\setminus\{\top\}$ we define a language $\Lang[x]$ and a model $\Model[x]$ as described in Sacks chapter III, section 4 \cite{Sacks1990}. 
	
	Briefly, $\Lang[x]$ is second-order arithmetic augmented with ranked set variables $X^\delta$ for $\delta<\omegaoneck$ over which we can quantify in the usual manner. A minor change is that our generic object will be an element of $\omega^\omega$ rather than a subset of $\omega$. This changes the atomic formulas slightly, and requires a function symbol $\texttt{G}^{\texttt{x}}$ rather than a predicate.
	
	A formula is \textbf{ranked} if each of its set variables is ranked; it is $\Sigma_1^1$ if it has an initial block of (unranked) set existentials, then a ranked formula. For a sentence $(\exists X^\delta)\varphi(X^\delta)$ and a formula $\mathcal{H}(n)$ of rank at most $\delta$ that has only one free variable, $n$, which is a natural number variable, $\varphi(\hat{n}\mathcal{H}(n))$ is obtained by replacing each occurrence of $t\in X^\delta$ with $\mathcal{H}(t)$, for each first-order term $t$. This operation decreases full ordinal rank.
	
	$\Model[x]$, is the class of all reals definable from $\G^x$ by a formula of $\Lang[x]$. Sacks provides simultaneous inductive definitions of the interpretation of the formulas of $\Lang[x]$ and the class of reals $\Model[x]$. Provided that $\omega_1^{\G^x} = \omegaoneck$, $\Model[x]$ is precisely the class of reals which are hyperarithmetic in $\G^x$, and so our forcing language has a term for every real hyperarithmetic in $\G^x$ assuming that we can preserve $\omegaoneck$.
\end{defn}

\begin{defn}[Forcing relation]
	Let $\varphi$ be a sentence of $\Lang[x]$ and $T$ a forcing condition. We define $T\forces_x \varphi$ by induction
	\begin{enumerate}
		\item If $\varphi$ is ranked, then $T\forces_x \phi$ iff for every $\G \in [T]$, $\Model[{x}]\models \varphi$.
		\item If $\varphi$ is unranked and $\varphi = (\exists n)\psi(n)$, then $T\forces \varphi$ iff there is an $n\in\omega$ such that $T\forces\psi(\underline{n})$.
		\item If $\varphi$ is unranked and $\varphi = (\exists X^\delta)\psi(X^\delta)$ then $T \forces_x \varphi$ iff there is a term $\mathcal{H}(n)$ of rank at most $\delta$ such that $T\forces_x \varphi(\hat{n}\mathcal{H}(n))$.
		\item If $\varphi$ is unranked and $\varphi = (\exists X)\psi(X)$ then $T\forces_x \varphi$ iff there is a $\delta<\omegaoneck$ such that $T\forces_x (\exists X^\delta)\psi(X^\delta)$.
		\item If $\varphi$ is unranked and $\varphi = \psi_1 \land \psi_2$ then $T \forces_x \varphi$ iff $T\forces_x \psi_1$ and $T\forces_x \psi_2$.
		\item If $\varphi$ is unranked and $\varphi = \neg\psi$ then $T\forces_x\varphi$ iff for all $S \in \P$ extending $T$, $\neg S\forces_x \psi$.
	\end{enumerate}
\end{defn}

\begin{nota}
	We denote formulas of our forcing languages as $\phi,\psi$, and occasionally use $\mathcal{H}(n)$ for a ranked formula with only one free variable, $n$, which is a natural number variable.
	
	As $\forces_x$ only holds between forcing conditions and sentences in $\Lang[x]$, we shall omit the $x$ from the $\forces$ if we have declared from which language the sentences come.
\end{nota}

It is standard to define forcing to be equal to truth for atomic formulas of a forcing language. However, we treat all ranked formulas at this ground level and define forcing to be equal to truth for all of them. This obscures the fact, which we will need to establish, that given a condition $T$ and a sentence $\phi$ there is an extension of $T$ deciding $\phi$. 

\begin{defn}[Generic sequence]
	A sequence $\{T_i : i\in\omega\}$ of elements of $\P$ is $\Lang[x]$-\textbf{generic} if $T_{i+1}\leq_\P T_i$ for each $i$, and for every $\phi\in\Lang[x]$ there is an $i$ such that $T_i$ forces  either $\phi$ or $\neg\phi$. The sequence is \textbf{generic} if it is generic for each $x\in \Lat\setminus\{\top\}$.
\end{defn}

Observe that if $\{T_i : i\in\omega\}$ is generic, then it meets each dense set
\[
D_n = \{T\in \P : T(\emptyset)(n)\downarrow\},
\] 
and so there is a unique $\G\in\bigcap_{i\in\omega}[T_i]$. We call such a $\G$ the \textbf{generic}.

\begin{lem}[Standard lemmas]
	Let $T$ be a condition, $x\in\Lat\setminus\{\top\}$ and $\phi\in\Lang[x]$. Then the following hold:
	\begin{enumerate}
		\item(Consistency) $T\not\forces \phi \land \neg\phi$. 
		\item(Extension) If $S$ extends $T$ and $T\forces \phi$, then $S\forces \phi$. 
		\item (Density) There is an $S$ extending $T$ deciding $\phi$ (i.e., $S$ either forces $\phi$ or forces its negation).
		\item (Forcing and truth) If $\{T_i:i\in\omega\}$ is $x$-generic and $\G$ is the generic object, then $\Model[x]\models \phi$ iff there is an $i$ such that $T_i\forces \phi$.
	\end{enumerate}
\end{lem}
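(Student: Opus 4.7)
My plan is to prove (1)--(4) by simultaneous induction on the complexity of $\phi$, with the base case being ranked formulas and the inductive step handling unranked ones. For ranked $\phi$, consistency and extension are immediate from the definition $T\forces_x \phi \iff (\forall\,\G\in[T])\,\Model[x]\models \phi$: a single generic cannot witness both $\phi$ and $\neg\phi$, and $S\leq_\P T$ implies $[S]\subseteq[T]$. The substantive content at the base case is density.

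For density on ranked $\phi$, I would induct on the ordinal rank and construct an $S\leq_\P T$ on which $\phi$ has constant truth value across $[S]$. Since a ranked formula's truth at $\G^x$ is hyperarithmetic in $\G^x$, the recursive representation $\{\Theta_i : i\in\omega\}$ makes this computation effective. Atomic cases involve only finitely much of $\G^x$ and can be homogenized by a transfer subtree $T_\sigma^\mu$; Boolean combinations iterate; for ranked existentials $(\exists X^\delta)\psi(X^\delta)$ one applies the inductive hypothesis to terms $\hat{n}\mathcal{H}(n)$ of rank at most $\delta$, obtaining either a refinement that forces the witness (hence $\phi$) or, via a fusion-style argument over all such terms, a refinement on which no term works and $\neg\phi$ is forced. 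At every stage the refinements produced must themselves lie in $\P$, and the closure of $\P$ under the operations $T\mapsto T_\sigma$ and $T\mapsto T^\mu$ recorded in the earlier proposition supplies the library of admissible refinements.

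The unranked cases of (1)--(3) are standard forcing bookkeeping. Extension is immediate for the conjunction, existential, and negation clauses; consistency propagates upward because $T\forces\phi$ together with $T\forces\neg\phi$ would require every extension of $T$ to force $\phi$ while, by the definition of forcing a negation, no extension does. Density for unranked $\phi$ is essentially built into the negation clause: either some $S\leq_\P T$ forces $\phi$ and we are done, or no such $S$ exists and the definition of forcing a negation gives $T\forces\neg\phi$ directly. Part (4) is another induction on $\phi$: use (3) to find $T_i$ deciding $\phi$ and (1) to rule out $T_i\forces\neg\phi$ when $\Model[x]\models\phi$, and conversely, for the backward direction, lean on the base case where forcing coincides with truth and propagate through the inductive clauses using extension.

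The main obstacle is density for ranked formulas while keeping the refinement inside $\P$. The standard homogenization arguments of Kjos-Hanssen and Shore do not a priori preserve the branch-coding-free property; I would exhibit each refinement as a transfer subtree $T_\sigma^\mu$, or a composition thereof, so that no new occurrences of any coding element $g(x,y,k)$ appear off the explicit forks of the tree. This is precisely what the uniformity structure, together with the requirement that each $\pi_l\concat\rho_{l,\alpha}$ contains coding elements only at position $|\pi_l|$, was designed to make possible.
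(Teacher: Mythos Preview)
Your overall architecture matches the paper's: consistency and extension are immediate for ranked $\phi$ from $[S]\subseteq[T]$ and routine by induction for unranked ones; density for unranked $\phi$ is indeed built into the negation clause; and forcing-equals-truth follows by induction once density is available. The paper likewise isolates density for \emph{ranked} formulas as the substantive content and handles the existential-quantifier step by fusion.

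The gap is in your treatment of fusion. You write that closure of $\P$ under $T\mapsto T_\sigma$ and $T\mapsto T^\mu$ ``supplies the library of admissible refinements,'' but the fused tree is not a composition of these operations: it is built level by level from the roots of an $\omega$-sequence of auxiliary conditions, and one must separately verify that the result is hyperarithmetic, uniform, congruence-respecting, and branch-coding-free. Hyperarithmeticity needs the $\Pi_1^1$ definability of the forcing relation on $\Sigma_1^1$ sentences together with Kreisel uniformization to choose the auxiliary conditions effectively; you do not mention this. More seriously, branch-coding-freeness of the fused tree is \emph{not} automatic from the auxiliary conditions being in $\P$: a condition's root may contain coding elements, so when roots of the auxiliaries become internal segments of the fused tree, coding can leak in. The paper controls this by ensuring each auxiliary does \emph{no more root-coding} than the subtree it refines, and that in turn rests on a lemma you omit entirely---that two conditions with the same branches modulo $x$ force exactly the same $\Lang[x]$-sentences. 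This lemma lets one replace any extension by its ``$x$-safe version'' (your $T^\mu$ with $\mu$ scrubbed of coding elements) without changing what is forced, and its proof, particularly the negation case where one must transport an arbitrary $R\leq S$ to an $R'\leq S'$ with the same mod-$x$ branches, is not routine. Without it, your sketch does not establish that the fusion stays inside $\P$.
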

\begin{proof}[Proof of consistency and extension properties]
	First the consistency property. Suppose $\phi$ is ranked. Then it follows from the fact that $\Model[x]\not\models \phi\land \neg\phi$ that $T$ does not force both $\phi$ and its negation. If $\phi$ is unranked, then the definition of forcing for negation implies that $T$ does not force both $\phi$ and its negation.
	
	For the extension property, if $\phi$ is ranked and $S\leq_\P T$, then $[S]\subseteq [T]$, consequently, if every branch of $T$ satisfies $\phi$, then every branch of $S$ does too and so $S\forces \phi$. If $\phi$ is unranked, we proceed by induction on the full ordinal rank of $\phi$. Details can be found in Chapter IV Section 4 of Sacks \cite{Sacks1990}.
\end{proof}
The inductive step in a standard proof of the density property goes through as normal (using the definition of forcing for negation). The issue is with the base case: It is not clear that given a condition that there is a refinement such that every branch satisfies a particular ranked formula. To show the existence of such an $S$ we need to establish the, so called, fusion property of trees. We also have the concern of coding unnecessarily at the root of $S$.

Before establishing the fusion lemma, we need some technical facts about the forcing relation.

\begin{lem}
	The relation $T\forces \varphi$ restricted to $\Sigma_1^1$ sentences $\varphi\in\Lang[x]$ is $\Pi_1^1$.
\end{lem}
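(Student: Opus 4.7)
The plan is to induct on the number of unranked existential set quantifiers prefixing $\phi$, starting from the ranked base case and peeling off one $\exists X$ at a time.

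For the base case, suppose $\phi$ is ranked, of some rank $\delta<\omegaoneck$. Sacks' simultaneous inductive definition of $\Model[x]$ and of the satisfaction relation for ranked $\Lang[x]$-formulas (see Sacks \cite{Sacks1990}, Chapter III, Section 4) shows that "$\Model[x]\models\phi$" is uniformly hyperarithmetic in $\G^x$: for $\phi$ of rank $\delta$ there is a fixed iterate of the jump depending only on $\delta$ that decides the satisfaction value from $\G^x$. Because $T$ is a hyperarithmetic uniform tree, "$\G\in[T]$" is $\Pi^0_1$ in $\G$ (given $T$). So unwinding the forcing definition,
\[
T\forces\phi \iff (\forall \G)\bigl(\G\in[T]\to \Model[x]\models\phi\bigr),
\]
a universal real-quantification applied to a hyperarithmetic matrix, which is $\Pi_1^1$ uniformly in $T$ and a G\"odel number for $\phi$.

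For the inductive step, if $\phi=(\exists X)\psi$ is $\Sigma_1^1$, then forcing clauses (3) and (4) give
\[
T\forces\phi \iff (\exists \delta<\omegaoneck)(\exists\mathcal{H}\text{ of rank}\leq\delta)\ T\forces\psi(\hat{n}\mathcal{H}(n)).
\]
Substitution strictly lowers the full ordinal rank of the inner sentence and reduces the number of unranked set existentials, so the inductive hypothesis applies and $T\forces\psi(\hat{n}\mathcal{H}(n))$ is $\Pi_1^1$. The outer quantification ranges over notations in Kleene's $\O$ (whose membership predicate is $\Pi_1^1$) and over natural-number indices for ranked terms of rank bounded by a given notation, both of which amount to number-quantification over a $\Pi_1^1$ matrix. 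Since $\Pi_1^1$ is closed under number-quantification and conjunction with $\Pi_1^1$ predicates, $T\forces\phi$ is $\Pi_1^1$.

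The substantive point is entirely the base case: I need the fact that Sacks' satisfaction predicate on ranked formulas really is hyperarithmetic in $\G^x$, uniformly in the formula's code. This requires unpacking Sacks' simultaneous induction and using that, on branches preserving $\omegaoneck$ (which is the setting of later sections), the class $\Model[x]$ coincides with the $\G^x$-hyperarithmetic reals, so that every ranked term has a uniform $\G^x$-hyperarithmetic interpretation. Once that is in hand the inductive step is routine quantifier bookkeeping inside $\Pi_1^1$.
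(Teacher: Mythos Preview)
Your argument is correct and is essentially the approach that underlies the paper's proof, which simply cites Sacks, Chapter~IV, Lemma~4.2, and notes that the modified notions of tree and extension remain uniformly arithmetic in codes so the complexity calculation is unchanged. Your sketch unpacks that citation: hyperarithmetic satisfaction for ranked formulas gives a $\Pi^1_1$ base case via the universal branch quantifier, and the unranked existential block is absorbed by closure of $\Pi^1_1$ under number quantification and conjunction with the $\Pi^1_1$ predicate ``$\delta\in\O$''.

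One small remark: your hedge that the base case ``requires'' branches preserving $\omegaoneck$ is unnecessary. Sacks' simultaneous inductive definition of satisfaction for ranked formulas is a transfinite recursion of length $\omegaoneck$ regardless of what $\G^x$ is, so ``$\Model[x]\models\phi$'' is uniformly hyperarithmetic in $\G^x$ and the code for $\phi$ without any hypothesis on $\omega_1^{\G^x}$. Preservation of $\omegaoneck$ is only needed later to identify $\Model[x]$ with the reals hyperarithmetic in $\G^x$, not for the complexity bound here.
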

\begin{proof}
	Our situation is only slightly different from that in Sacks Chapter IV, Lemma 4.2. We have a slightly different notion of tree and of extension, but they are all uniformly arithmetic in codes for the trees, and so the complexity has not increased.
\end{proof}

\begin{defn}
	Let $\sigma \in \prod_{i=0}^l \Theta_i$ be a string and $x\in \Lat\setminus\{\top\}$. The \textbf{$x$-safe version of $\sigma$}, $\sigma_x$, is defined by taking each $n<|\sigma|$ such that $\sigma(n)=\alpha \in C$ and defining $\sigma_x(n) = \beta$ where $\beta\in \Theta_0\setminus C$ is the member of the lattice table which agrees with $\alpha$ modulo $x$, but is not coding, and otherwise not changing $\sigma$. (There may be more than once such $\beta$, so for each $x\in\Lat\setminus\{\top\}$ and coding $\alpha$ pick a particular $\beta$ and always use that.) Observe that $\sigma\equiv_x \sigma_x$.
	
	If $T$ is a condition and $S$ extends $T$ then the \textbf{$x$-safe version of $S$ with respect to $T$} is the condition $S^{T(\sigma_x)}$ where $\sigma$ is the string such that $T(\sigma) = S(\emptyset)$ and $\sigma_x$ is its $x$-safe version. Note that $\{\G^x : \G\in [S]\} = \{\G^x : \G\in [S^{T(\sigma_x)}]\}$. (As $T$ preserves congruences, and so $T(\sigma) \equiv_x T(\sigma_x)$, and the $x$-safe version of $S$ with respect to $T$ does no more root-coding than $T$. As $T(\sigma_x) = S^{T(\sigma_x)}(\emptyset)$, $\sigma_x$ does no coding, and $T$ is branch-coding-free.)
\end{defn}

\begin{lem}
	Let $S$ and $S'$ be conditions that have the same branches modulo $x$ (i.e., $\{\G^x : \G\in S\} = \{\G^x : \G\in S'\}$), and let $\phi$ be a sentence of $\Lang[x]$. Then $S\forces \phi$ iff $S'\forces \phi$. Hence, in particular, if there is an extension $S$ of $T$ forcing a sentence, then there is an extension of $T$ forcing that sentence which does no more root-coding than $T$: namely, the $x$-safe version of $S$.\label{samebrancheslemma}
\end{lem}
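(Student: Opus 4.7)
The plan is to prove the biconditional by induction on the ordinal rank of $\phi$, and then derive the ``in particular'' clause as an immediate consequence.

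For the base case, $\phi$ is ranked, so by definition $S\forces \phi$ iff $\Model[x]\models \phi$ for every $\G\in [S]$. Because $\Lang[x]$ accesses the generic only through the function symbol $\texttt{G}^{\texttt{x}}$, the truth of $\phi$ in $\Model[x]$ depends on $\G$ solely through $\G^x$. Hence $S\forces \phi$ is a statement about the set $\{\G^x : \G\in[S]\}$, and the hypothesis that this set agrees with $\{\G^x : \G\in[S']\}$ gives the equivalence immediately.

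The conjunction case and all three existential cases ($\exists n$, $\exists X^\delta$, $\exists X$) reduce directly to the induction hypothesis applied to the subformula with the appropriate witness. None of these witnesses are conditions --- they are natural numbers, ranked terms, or ordinal bounds --- so the induction goes through unchanged.

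The crucial step, and the main obstacle, is negation: assume $\phi = \neg\psi$, that $S\forces \neg\psi$, and, toward a contradiction, that some $R'\leq_\P S'$ forces $\psi$. I need to produce $R\leq_\P S$ with $\{\G^x : \G\in[R]\} = \{\G^x : \G\in[R']\}$; the induction hypothesis applied to $\psi$ then gives $R\forces \psi$, contradicting $S\forces \neg\psi$. In the setting of the corollary, where the relevant pair is $S$ and its $x$-safe version $S^{T(\sigma_x)}$, this is straightforward: any refinement of $S^{T(\sigma_x)}$ is the root transfer of a unique refinement $R\leq_\P S$ along the same map, and because $T$ is congruence-respecting we have $T(\sigma)\equiv_x T(\sigma_x)$, so $R$ and its root transfer share their $x$-projections. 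For the general statement, a similar matching argument goes through level by level using the uniformity property of conditions: at each splitting height, the splits in $R'$ correspond to a subset of $\Theta_l$, and the congruence-respecting structure of $S$ guarantees that the same subset determines an $x$-equivalent split inside $S$.

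The ``in particular'' clause is now immediate: by the parenthetical remark in the definition of the $x$-safe version, $\{\G^x : \G\in[S]\} = \{\G^x : \G\in[S^{T(\sigma_x)}]\}$ and $S^{T(\sigma_x)}$ does no more root-coding than $T$. Applying the main lemma, $S\forces \phi$ implies $S^{T(\sigma_x)}\forces \phi$, giving the desired extension of $T$.
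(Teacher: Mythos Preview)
Your overall structure matches the paper's: induction on rank, with the base case and the positive connectives handled exactly as you describe. The gap is in the negation step for the \emph{general} statement. You correctly identify that, given $R'\leq_\P S'$ forcing $\psi$, you need an $R\leq_\P S$ with the same branches modulo $x$; but the sentence ``a similar matching argument goes through level by level'' is not a construction, and the actual work here is the bulk of the proof.

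The paper defines the transferred tree explicitly by $R(\sigma) = S(\tau_\sigma)$, where $\tau_\sigma$ is the unique string with $R'(\sigma) = S'(\tau_\sigma)$. For this to make sense and to yield a condition with the same $x$-branches, two nontrivial claims must be established first: (i) for every level $l$, the \emph{heights} of $S$ and $S'$ at level $l$ coincide; and (ii) for every $\sigma$ of length $l$, $S(\sigma)\concat\pi^S_l \equiv_x S'(\sigma)\concat\pi^{S'}_l$. Claim (i) is not automatic from uniformity or congruence-respecting alone: it uses that when $x\neq\bot$ there exist $\alpha\not\equiv_x\beta$ in each $\Theta_l$, so the first genuine $x$-disagreement among branches of $S$ occurs precisely at the splitting height, and this disagreement must be mirrored in $S'$ by the hypothesis on branches. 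Only after (i) and (ii) are proved can one verify that the transferred tree is a condition and that its branches agree with those of $R'$ modulo $x$. Your appeal to the congruence-respecting structure is relevant to (ii), but you have not addressed (i) at all, and without it the level-by-level transfer cannot even be formulated. Your handling of the special $x$-safe-version case is fine precisely because there the two trees differ only in a fixed initial segment, so the height-matching is trivial; in the general case it is not.
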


\begin{proof}
	\emph{Base case, $\phi$ is ranked:} By the definition of forcing for ranked formulas, as the branches of $S$ and the branches of $S'$ are the same modulo $x$, then every branch of $S$ satisfies $\phi$ iff every branch of $S'$ does.
	
	\emph{Inductive step:} If $\phi = \psi_1 \land \psi_2$, then $S\forces \phi$ iff $S\forces\psi_1$ and $S\forces\psi_2$ iff (by induction) $S'\forces \psi_1$ and $S'\forces \psi_2$ iff $S'\forces \psi_i \land \psi_2$.
	
	If $\phi = (\exists x)\psi(x)$ then $S\forces \phi$ iff there is an $n$ such that $S\forces \psi(\underline{n})$ iff there is an $n$ such that $S' \forces \psi(\underline{n})$ iff $S'\forces (\exists x)\psi(x)$.
	
	For $\phi = (\exists X^\delta)\psi(X^\delta)$ then the proof is similar to the natural number existential, but with a witnessing formula $\mathcal{H}$ of rank at most $\delta$ in place of $n$. 
	
	If $\phi = (\exists X)\psi(X)$ then $S\forces \phi$ iff there is a $\delta<\omegaoneck$ such that $S\forces (\exists X^\delta)\psi(X^\delta)$ iff there is a $\delta<\omegaoneck$ such that $S'\forces (\exists X^\delta)\psi(X^\delta)$ iff $S'\forces \phi$.
	
	The case when $\phi = \neg \psi$ is somewhat trickier, we need to be able to transform extensions $R$ of $S$ which force $\psi$ into extensions $R'$ of $S'$ also forcing $\psi$. By the inductive hypothesis, it suffices to ensure that the branches of $R'$ are the same as those of $R$ modulo $x$.
	
	Given $R\leq_\P S$ we define $R'(\sigma) = S'(\tau_\sigma)$ where $\tau_\sigma$ is the (unique) string such that $R(\sigma) = S(\tau_\sigma)$. $R'$ is hyperarithmetic as $R,S$ and $S'$ are (so we can use $R$ and $S$ to find $\tau_\sigma$ for any $\sigma$, and then plug this into $S'$), and, furthermore,  $R'$ is a branch-coding-free, congruence-respecting, uniform subtree of $S'$, as $R$ is for $S$. It remains to show that the branches of $R$ and $R'$ are the same modulo $x$.
	
	\textbf{Claim:} For each level $l$ the height of $S$ and of $S'$ are the same.
	
	The height of the $0$th levels of $S$ and $S'$ are, respectively $|S(\emptyset)\concat\pi^S_0|$ and $|S'(\emptyset)\concat \pi_0^{S'}|$. Every branch on $S$ extends $S(\emptyset)\concat\pi^S_0$ and so every branch on $S'$ must extend this string modulo $x$. By the implementation of the nonorder property, the different branches of $S$ at this level all disagree at the $|S(\emptyset)\concat \pi_0^{S}|$th place. If $x= \bot$ then there is only one branch on $S,S',R,R'$: The constant $0$ branch, and so the claim is shown, otherwise $x\neq \bot$ and there are $\alpha,\beta$ which disagree modulo $x$, and so there are branches of $S$ which disagree at the $|S(\emptyset)\concat \pi_0^{S}|$ place, modulo $x$.
	
	This must be reflected in $S'$, and so there is splitting at the $|S(\emptyset)\concat \pi_0^{S'}|$th place on $S'$ therefore, the height of $S'$ at level $0$ must be less than or equal to that of $S$. Interchanging $S$ and $S'$ in this argument shows the heights at the root must be equal.
	
	Now we proceed inductively: For $\sigma$ of length $l$ we assume $|S(\sigma)\concat \pi^S_l| = |S'(\sigma)\concat \pi^{S'}_l|$, hence, it suffices to show that $|\rho^{S\smallfrown}_{l,\alpha} \pi^S_{l+1}| = |\rho^{S'\smallfrown}_{l,\alpha}\pi^{S'}_{l+1}|$. But the argument is similar to the base case: We know there are mod $x$ disagreements in branches of $S$ at $|S(\sigma)\concat \pi_l^S {}\concat\rho_{\alpha,l}^S{}\concat \pi_l^S|$, consequently, there must be mod $x$ disagreements in branches of $S'$ at this place too. Hence, the height of the $l+1$ level of $S'$ is at most the height of the $l+1$st level of $S$. Interchanging $S$ and $S'$ shows they must be equal. 
	
	\textbf{Claim:} For every $\sigma$ and every level $l$,  $S(\sigma)\concat\pi^S_l \equiv_x S'(\sigma)\concat\pi^{S'}_l$. 
	
	Every branch of $S$ extends $S(\emptyset)\concat \pi^S_0$ and so they all agree modulo $x$. This is reflected in the branches of $S'$ and so $S'(\emptyset)\concat\pi^{S'}_0$ must agree with the initial segment modulo $x$.
	
	Then, inductively, if it is true up to level $l$ then consider $S(\sigma)\concat\pi^S_l{}\concat\rho^S_{l,\alpha}{}\concat \pi^S_{l+1}$ for any $\alpha\in\Theta_l$. By induction $S(\sigma)\concat\pi^S_l$ and $S'(\sigma)\concat\pi^{S'}_l$ agree modulo $x$ and by the last claim they are of the same height. Then, by the implementation of nonorder, the first place where this string is undefined but $S(\sigma)\concat\pi^S_l{}\concat\rho^S_{l,\alpha}{}\concat \pi^S_{l+1}$ is takes value $\alpha$. As $S$ is congruence-respecting, then if $\alpha\equiv_x \beta$ then $\rho^S_{l,\beta}\equiv_x \rho^S_{l,\alpha}$, and so every branch of $S$ (and so of $S'$) which looks like $\alpha$ (modulo $x$) at this place looks the same for the rest of the string $\rho^S_{l,\alpha}$ modulo $x$. Consequently, $\rho^{S'}_{l,\alpha}$ must agree modulo $x$ with $\rho^{S}_{l,\alpha}$ because $S'$ is also congruence-respecting.
	
	\textbf{Claim:} The $R$ and $R'$ above have the same branches modulo $x$.
	
	If $G\in[R]$ is a path, there is some sequence $\{\sigma_i:i\in\omega\}$ of compatible strings such that $R(\sigma_i)$ converges to $G$, and $|\sigma_i|=i$. As $R$ is a subtree of $S$ there is a sequence of compatible strings $\{\tau_i:i\in\omega\}$ such that $S(\tau_i)= R(\sigma_i)$. By the previous claim, $S'(\tau_i) \equiv_x S(\tau_i)$, and so 
	\[
		R'(\sigma_i) = S'(\tau_i) \equiv_x S(\tau_i) = R(\sigma_i)
	\] 
	and so there is some $G'\in [R']$ such that $G\equiv_x G'$. Interchanging the role of $R$ and $R'$ shows that every branch of $R'$ has a corresponding branch in $R$ which agrees modulo $x$, hence the branches are the same modulo $x$, as required.
	
	So, suppose $S'\forces \neg\psi$, then there is no $R'\leq S'$ such that $R'\forces \psi$. If there were $R\leq S$ forcing $\psi$ then, we can construct $R'\leq S'$ which forces $\psi$ (as we can construct $R'$ with the same branches as $R$ modulo $x$ and so, by induction, forcing $\psi$). Consequently, $S$ must force $\neg\psi$, which completes the proof.
\end{proof}

\begin{lem}[Fusion]
	Let $\{\phi_i:i\in\omega\}$ be a hyperarithmetic sequence of $\Sigma_1^1$ formulas of $\Lang[x]$, and let $T$ be a condition such that for every $S\leq_\P T$ and every $j\in\omega$, there is an $R\leq_\P S$ such that $R\forces \phi_j$. Then there is a condition $V\leq_\P T$ forcing each $\phi_i$ which does no more root coding than $T$.
\end{lem}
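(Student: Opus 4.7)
The plan is a fusion construction, producing $V$ as a levelwise limit of a descending sequence $T = T_0\geq_\P T_1 \geq_\P T_2\geq_\P\cdots$ in $\P$. I will arrange that each $T_{n+1}$ forces $\phi_n$ and agrees with $T_n$ on the uniform tree data through level $n$: same root, same $\pi_l$ and $\rho_{l,\alpha}$ for $l\leq n$. Granted this, the data stabilizes and $V$ defined by taking from $T_{l+1}$ the level-$l$ data, together with the common root, is a well-defined uniform tree and is a subtree of every $T_n$. By the extension property, $V\forces \phi_n$ for every $n$. Because no step will add anything to the root of $T_n$, $V$ will do no more root coding than $T$.

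To build $T_{n+1}$ from $T_n$, for each string $\mu$ of length $n+1$, the subtree $(T_n)_\mu$ is a condition (by the earlier proposition on transfer and subtree operations) hence $\leq_\P T$, so by hypothesis there exists $R_\mu\leq_\P (T_n)_\mu$ with $R_\mu\forces \phi_n$. Replacing each $R_\mu$ by its $x$-safe version relative to $(T_n)_\mu$ via Lemma \ref{samebrancheslemma} (which preserves forcing since the resulting tree has the same branches modulo $x$) ensures it does no more root coding than $(T_n)_\mu$, hence than $T$. I then glue: $T_{n+1}$ retains the root and the level-$l$ data of $T_n$ for $l\leq n$, and below level $n$ the subtree $(T_{n+1})_\mu$ is $R_\mu$ (with any excess at the root of $R_\mu$ absorbed into the new level-$(n+1)$ data of $T_{n+1}$). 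Branch-coding-freeness and congruence-respectingness are levelwise properties inherited from the $R_\mu$'s, and a hyperarithmetic choice of the $R_\mu$'s is available because the forcing relation for $\Sigma_1^1$ sentences in $\Lang[x]$ is $\Pi_1^1$ in codes for conditions and the sequence $\{\phi_i\}$ is hyperarithmetic.

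The principal technical hurdle is uniformity of $T_{n+1}$: the $R_\mu$'s for different $\mu$'s of length $n+1$ must share common $\pi_l$ and $\rho_{l,\alpha}$ at every level (after the index shift caused by gluing), since the uniformity condition on the glued tree demands these quantities be independent of the branch chosen at level $n+1$. This is precisely the defect noted in Kjos-Hanssen and Shore's original fusion argument. I would resolve it by first selecting one $R_{\mu_0}$, and then, for each other $\mu$, constructing $R_\mu$ inside $(T_n)_\mu$ whose uniform data coincides with that of $R_{\mu_0}$; the congruence-respectingness and uniformity of $T_n$ together with Lemma \ref{samebrancheslemma} make this possible, as transferring $R_{\mu_0}$ along the uniform symmetries of $T_n$ and then taking the $x$-safe version yields such an $R_\mu$ still forcing $\phi_n$. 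Once this compatible choice is made, the fused $V$ is verified to be branch-coding-free, congruence-respecting, uniform, hyperarithmetic, and to force each $\phi_i$, which completes the proof.
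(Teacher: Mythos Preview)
Your overall plan---build $V$ by a levelwise fusion so that its level-$l$ data stabilizes at stage $l{+}1$---matches the paper's strategy, and your identification of the uniformity issue as the principal hurdle is exactly right. However, your proposed resolution of that hurdle has a genuine gap. You claim that, having found $R_{\mu_0}\leq_\P (T_n)_{\mu_0}$ forcing $\phi_n$, you can obtain $R_\mu$ for every other $\mu$ of length $n{+}1$ by transferring $R_{\mu_0}$ to position $\mu$ and taking the $x$-safe version, and that this $R_\mu$ still forces $\phi_n$. But Lemma~\ref{samebrancheslemma} only says that trees with the \emph{same branches modulo $x$} force the same sentences of $\Lang[x]$; it does not say that the transfer $(R_{\mu_0})^{T_n(\mu)}$ has the same branches modulo $x$ as $R_{\mu_0}$. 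When $\mu\not\equiv_x\mu_0$, congruence-respectingness of $T_n$ gives you no relation between $T_n(\mu)$ and $T_n(\mu_0)$ modulo $x$, so the transferred tree will in general have different $x$-branches and need not force $\phi_n$. The $x$-safe operation cannot help here: it replaces coding entries by non-coding ones \emph{within the same $\equiv_x$-class}, so it never moves you between $\equiv_x$-classes.

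The paper handles this by an iterative sweep across the level rather than a single transfer. At level $l$ one lists the branch positions and, starting from the previous stage's final tree, repeatedly transfers to the next position and then applies the $\Pi^1_1$-uniformized selector $R(\cdot,l)$ to refine to a condition forcing $\phi_l$ there, producing $U^l_0, U^l_1,\ldots, U^l_{m(l)}$. The point is that $U^l_{k+1}$ refines the transfer of $U^l_k$, so transferring $U^l_{k+1}$ back to position $k$ yields a genuine subtree of $U^l_k$; iterating, the final tree $U^l_{m(l)}$, transferred to \emph{any} earlier position $k$, is a subtree of $U^l_k$, which forces $\phi_l$ because $R$ was applied at that position. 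By extension, each transfer of $U^l_{m(l)}$ forces $\phi_l$, and these transfers (having identical uniform data, since transfer only alters the root) glue to give the next level of $V$. This iterative refinement---applying $R$ once \emph{per position}, each time inside the transfer of the previous tree---is the missing idea in your argument; a single application of $R$ followed by passive transfers does not suffice.
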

\begin{proof}
	Fix such a sequence $\{\phi_i:i\in\omega\}$ and a condition $T$. By the previous lemma, for every $S\leq_\P T$ and $i\in\omega$, as there is an $R\leq_\P S$ forcing $\phi$, there is one forcing $\phi$ and doing no more root-coding than $S$. Consider the predicate 
	\begin{quote}
		$R\in\P$, refines $S\in\P$, does no more root-coding than $S$, and forces $\phi_i$.
	\end{quote}
	As $\P$ and the forcing relation are $\Pi_1^1$ and the other clauses are arithmetic, this predicate is uniformly $\Pi_1^1$ in $R,S,i$. By Krisel's uniformization theorem, there is a partial $\Pi_1^1$ function which produces $R$ in terms of $S$ and $i$. We denote this function $R(S,i)$. By assumption, $R$ is total on the conditions $S$ extending $T$.
	
	We want to construct a single condition $V$ that extends $T$ and forces each $\phi_i$ simultaneously. We construct $V$ level by level, as well as auxiliary conditions $U^{l}_j$ where $l\in\omega$ is a level, and $j$ varies between $0$ and the number $m(l)$, which is one less than the number of branches of $T$ at level $l$ (i.e., $m(l) = \prod_{k=0}^l |\Theta_k|-1$). Fix a simultaneous hyperarithmetic enumeration $\alpha^l_{k}$ of each $\Theta_l$, and order strings lexicographically. 
	
	\textbf{Stage $0$:} $V(\emptyset) = T(\emptyset)$.
	
	We define $U^0_0$ to be $R(T_{\alpha^0_0},0)$, a subtree of $T_{\alpha^0_0}$ which forces $\phi_0$, and does no more root-coding than $T_{\alpha^0_0}$. By the uniformity of $T$, $(U^0_0)^{T(\alpha^0_1)}$ is a subtree of $T_{\alpha^0_1}$ and so of $T$. We define $U^0_1 = R((U^0_0)^{T(\alpha^0_1)},0)$, we continue in this fashion across the level defining $U^0_{k+1} = R((U^0_{k})^{T(\alpha^0_k)},0)$. At the end we have $U^0_{m(0)}$. By uniformity $(U^0_{m(0)})^{T(\alpha^0_k)}$ is a subtree of $U^0_k$ for every $k$, and as such, must force $\phi_0$.
	
	We define $V(\alpha) = (U^0_{m(0)})^{T(\alpha)}(\emptyset)$ for each $\alpha \in \Theta_0$. By the uniformity of both $T$ and $U^0_{m(0)}$, $V$, as defined so far, is uniform and congruence-respecting. 
	
	To see it is branch-coding-free, observe that if $V(\alpha)(n) = \beta \in C$ and $n\geq |V(\emptyset)|$, then, by definition, $(U^0_{m(0)})^{T(\alpha)}(\emptyset)(n)=\beta$. If $n<|T(\alpha)|$, then $(U^0_{m(0)})^{T(\alpha)}(\emptyset)(n)=T(\alpha)(n)$, and as $T$ is branch-coding-free, this means that  $\alpha = \beta$ and $n$ is precisely $|T(\emptyset)\concat \pi^T_0|$, which is not unnecessary coding. Otherwise, $n\geq|T(\alpha)|$. Let $k$ be the first stage such that $U^0_k(\emptyset)(n)\downarrow$. $U^0_0$ does no more root-coding than $T(\alpha^0_0)$, by the choice of $R$, and so, inductively across the level, $U^0_{j+1}$ does no more root-coding than $(U^0_{j})^{T(\alpha^0_{j+1})}$ which does no more root-coding than $T_{\alpha^0_{j+1}}$. Therefore, $\beta = \alpha^0_{k}$ and $n$ must be precisely $|T(\alpha^0_k)\concat \pi^T_1|$ which is not unnecessary coding.

	\textbf{Stage $l>0$:} Assume we have defined $V$ up to level $l$, and so far it is branch-coding-free, congruence-respecting, and uniform, and that we have $U^{l-1}_{m(l-1)}$ a tree which, by induction, is a forcing condition which is a subtree of $T_{\tau}$ where $\tau$ is last string in our uniform enumeration of strings of length $l$ and has root at least as long as the height of $V$ so far.
		
	Now we define the $U^{l}_k$ for $k\in\{0,\ldots, m(l+1)\}$. Starting with the least string $\sigma\concat \alpha$ of length $l$ we set $U^{l}_{0} = R((U^{l-1}_{m(l-1)})^{V(\sigma\concat\alpha)}_{0^{l-1}{}\concat\alpha},l)$, then given $U^{l}_k$ and the $k+1$th string $\sigma\concat\alpha$ we define
	\[
		U^{l}_{k+1} = R((U^{l}_{k+1})^{S(\sigma\concat\alpha)},l).
	\]
	At the end we have $U^l_{m(l)}$, and by a similar argument as in the base case, if $\sigma\concat\alpha$ is the $k$th string of length $l$ then $(U^l_{m(l)})^{U^l_{k}(\emptyset)}$ is a subtree of $U^l_{k}$ which forces $\phi_l$. So, we define
		\[
			V(\sigma\concat\alpha) = (U^l_{m(l)})^{U^l_{k}(\emptyset)}(\emptyset).
		\]
	This preserves that $V$ is (so far) branch-coding-free congruence-preserving and uniform, as each $U^l$ is. We also have that the root of $U^l_{m(l)}$ is sufficiently long and has the various other properties assumed by the induction.
	
	This completes the inductive construction of $V$, which is a forcing condition extending $T$ doing no more root-coding. $V$ forces each $\phi_i$ as for every string $\sigma$ of length $i$ every path on $V$ extending $V(\sigma)$ is a path on one of the $U^i_k$s, and so by construction of the $U$s and the fact that the formulas are $\Sigma_1^1$ every path on $V$ makes $\phi_i$ true, and so $\phi_i$ is forced by $V$.
\end{proof}

With the fusion lemma in hand, we can complete the proofs of the standard lemmas regarding the forcing relation.

\begin{proof}[Proof of the density property]
	Let $\phi$ be a sentence in $\Lang[x]$ and $T$ a forcing condition. If $\phi$ is unranked, then there is an $S\leq_\P T$ deciding $\phi$, by the definition for forcing the negation of an unranked formula. By choosing the $x$-safe version of $S$, we can also ensure that $S$ does no more root-coding than $T$.
	
	If $\phi$ is ranked, then we proceed by induction on the full ordinal rank and logical complexity of the formula. To decide atomic sentences, we can pick $T_\sigma$ for some sufficiently long $\sigma$. Note we can choose $\sigma$ so as to do no more root-coding.
	
	The induction for cases for $\land$ and $\neg$ are standard, the difficulty comes with existential quantifiers. For instance, if $\phi = (\exists X^\delta)\psi(X^\delta)$, then let $\mathcal{H}_i(n)$ be an effective enumeration of all formulas of rank at most $\delta$ whose sole free variable is $n$. If there is an $i$ and an $S\leq_\P T$ such that $S\forces \psi(\hat{n}\mathcal{H}_i(n))$, then $S\forces \psi$, and we could choose the $x$-safe version of $S$ so as to do no more root-coding.
	
	Otherwise, for each $S\leq_\P T$ and $i$, $S\not\forces \varphi(\hat{n}\mathcal{H}(n))$, and so, by induction, for each $i$ and $S\leq_\P T$ there is an $R\leq_\P S$ such that $R\forces \neg \psi(\hat{n}\mathcal{H}(n))$.
	
	Now we can apply the fusion lemma to the sequence $\neg\psi(\hat{n}\mathcal{H}_i(n))$ to find an $S\leq_\P T$ forcing each $\neg \psi(\hat{n}\mathcal{H}_i(n))$, and so $S\forces \neg\psi$. Furthermore, we can choose $S$ to do no more coding, as the fusion lemma allows this.
	
	The case for a natural number existential is similar. Observe that each extension could be chosen to do no more root-coding. 
\end{proof}

\begin{proof}[Proof that truth is forcing]
	Suppose $\{T_i:i\in\omega\}$ is $x$-generic and $\G$ is the generic object. Firstly, let $\phi$ be a ranked formula. Then, as the sequence is $x$-generic, there is an $i$ such that $T_i$ decides $\phi$. By the definition of forcing for ranked formulas, either every branch of $T_i$ satisfies $\phi$ or every branch satisfies its negation. In particular, as $\G\in[T_i]$, if $\G^x$ satisfies $\phi$ then every branch does, and if $\G^x$ satisfies its negation, then every branch does.
	
	If $\phi$ is unranked, then we proceed by induction on the logical complexity of $\phi$. The proof, from here, is standard.
\end{proof}

\begin{lem}
	Let $\G$ be $x$-generic. Then $\G^x$ preserves $\omegaoneck$.
\end{lem}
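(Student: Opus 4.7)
The plan is to argue by contradiction and reduce a putative collapse of $\omegaoneck$ to a $\Sigma_1^1$ definition of Kleene's $\O$, contradicting its $\Pi_1^1$-completeness. Suppose $\omega_1^{\G^x} > \omegaoneck$; then there is a $\G^x$-recursive well-ordering $\prec$ of $\omega$ of order type at least $\omegaoneck$, defined by some rank-$0$ formula $\psi(n,m)$ of $\Lang[x]$. Since $\Model[x]$ contains every real arithmetic in $\G^x$, any infinite descending chain of $\prec$ would live in $\Model[x]$; as none exists in $V$, none exists in $\Model[x]$, so $\Model[x] \models$ ``$\psi$ defines a well-ordering of $\omega$''. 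By the truth-is-forcing lemma, some condition $T_0$ in the generic sequence forces this $\Pi_1^1$ statement.

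For each $a \in \omega$ let $W_a$ denote the $a$-th partial recursive binary relation on $\omega$, so that $a \in \O$ iff $W_a$ is a well-ordering. I would consider the predicate
\[
R(a) \iff (\exists S \leq_\P T_0)\bigl[\, S \forces_x (\exists f)\,\text{``$f$ is an order-preserving injection of $W_a$ into $\psi$''}\,\bigr].
\]
The inner sentence is $\Sigma_1^1$ in $\Lang[x]$ (one set existential over a rank-$0$ matrix using the recursive $W_a$ and the fixed $\psi$), and the preceding lemma makes forcing for $\Sigma_1^1$ sentences $\Pi_1^1$ uniformly in the sentence. Since $a \mapsto$ (code of the inner sentence) is recursive, $R$ itself is $\Sigma_1^1$.

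The key claim is that $R = \O$, yielding a $\Sigma_1^1$ definition of $\O$ and the desired contradiction. For $a \in \O$: since $|W_a| < \omegaoneck$ and $\prec$ has order type at least $\omegaoneck$, an order-preserving injection $f: W_a \hookrightarrow \prec$ is definable by transfinite recursion of length $|a|$ using $\G^x$, so $f$ is hyperarithmetic in $\G^x$, lies in $\Model[x]$, and by truth-is-forcing some refinement of $T_0$ in the generic sequence forces its existence, giving $R(a)$. For $a \notin \O$: $W_a$ carries a recursive infinite descending chain $g: \omega \to \omega$, so any $S \leq_\P T_0$ forcing existence of such an $f$ would also force $f \circ g$ to be an infinite descending chain in $\psi$, contradicting that $T_0$ forces $\psi$ well-founded (via extension and consistency of forcing). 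The main obstacle I anticipate is verifying that the embedding statement fits the $\Sigma_1^1$ template of the previous lemma uniformly in $a$, and checking the absoluteness claims -- that the embedding really lives in $\Model[x]$, and that the composition $f \circ g$ genuinely witnesses ill-foundedness inside the forcing semantics -- since these are where the subtle interplay between $V$, $\Model[x]$, and the forcing relation occurs.
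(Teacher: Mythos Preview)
Your overall plan mirrors Sacks's classical argument (which is all the paper cites), but there is a genuine gap in the complexity computation for $R$. You note that forcing a $\Sigma_1^1$ sentence is $\Pi_1^1$ and then prefix an existential over conditions $S$, concluding that $R$ is $\Sigma_1^1$. But conditions are hyperarithmetic trees, so that existential is really a number quantifier over codes, and ``$n$ codes a condition below $T_0$'' is itself $\Pi_1^1$. Thus $R(a)$ has the form $(\exists n)[\Pi_1^1(n,a)]$, and a number existential does not turn $\Pi_1^1$ into $\Sigma_1^1$: $R$ is $\Pi_1^1$. Since $\O$ is $\Pi_1^1$-complete, the equality $R=\O$ yields no contradiction and the argument does not close. (A smaller slip: for $a\notin\O$ the relation $W_a$ need not carry a \emph{recursive} descending chain; by the Kreisel or low basis theorem it carries a hyperarithmetic one, and that is what you actually need in order to place $f\circ g$ inside $\Model[x]$.)

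The paper's one-line proof defers to Sacks IV \S5, where the fusion lemma is invoked \emph{directly} rather than only through density and forcing-equals-truth. The argument there is organized per index $e$: one shows every condition has an extension forcing a single $\Sigma_1^1$ sentence asserting that $\{e\}^{\G^x}$ either fails to be a well-ordering or has order type below some specific recursive bound, and that bound is extracted by applying $\Sigma_1^1$-bounding to a set built from the \emph{negation} of the $\Pi_1^1$ forcing relation (this is where the complexity flips the right way). The fusion lemma then amalgamates these per-$e$ extensions into a single condition handling all $e$ at once --- which is why the paper singles it out as ``providing the proof.'' Your approach tries to obtain the contradiction from one fixed $e$ via a single appeal to forcing-equals-truth; that route would need precisely the $\Sigma_1^1$ bound on $R$ that is not available.
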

\begin{proof}
	As in Chapter IV Section 5 of Sacks \cite{Sacks1990}, the fusion lemma provides the proof.
\end{proof}

Observe that we can construct a generic sequence, by, step-by-step, extending the current condition to decide the next sentence and at no point do we have to increase the root-coding of the current condition. Now we turn to showing we can force our embedding to have the properties we need.

\subsection{Producing almost-initial-segments}

We want to verify that we can construct a generic sequence $\{T_i:i\in\omega\}$ such that the generic object $\G$ induces an embedding $x\mapsto \text{degree}(\G^x)$ which preserves $\sqleq$, $\not\sqleq$, $\join, \meet$, sends $\Lat\setminus\{\top\}$ to an initial segment, and sends $\top$ to $\O$.

\begin{nota}
	For each $x\in\Lat\setminus\{\top\}$, we number the ranked terms $\hat{n}\mathcal{H}$ of $\Lang[x]$ by ordinals $\delta<\omegaoneck$ and denote the characteristic function of the set they stand for by $\{\delta\}^{\G^x}$ (of course, $\{\delta\}^{\G^x}$ depends on a choice of forcing condition, and need not be total).
\end{nota}

\begin{defn}
	A condition $T$ \textbf{decides} $\{\delta\}^{\G^x}$ via $q$ (a map into $\{0,1\}$), if for every $n$ and $\sigma\in\dom{T}$ of length $n$, $T_\sigma\forces \{\delta\}^{\G^x}(n) = \underline{q(n,\sigma)}$.
\end{defn}

\begin{lem}
	Let $T$ be a condition which decides $\{\delta\}^{\G^x}$ via $q$. Then $q$ is hyperarithmetic.
\end{lem}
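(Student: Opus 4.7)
The plan is to extract hyperarithmeticity of $q$ by combining the complexity bound on the forcing relation (the previous lemma, asserting that $T \forces \varphi$ restricted to $\Sigma_1^1$ sentences is $\Pi_1^1$) with the hypothesis that $T$ actually \emph{decides} $\{\delta\}^{\G^x}$ via $q$, which gives uniqueness and thus lets us flip $\Pi_1^1$ into $\Delta_1^1$.

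First I would observe that for each fixed $n$, $\sigma$, and $k \in \{0,1\}$, the statement $\{\delta\}^{\G^x}(n) = \underline{k}$ is a ranked sentence of $\Lang[x]$: the term $\{\delta\}^{\G^x}$ has rank $\delta < \omegaoneck$, so substituting it into the atomic equation produces a ranked formula, which is trivially $\Sigma_1^1$. Hence the previous lemma applies and the relation
\[
R(n, \sigma, k) \iff T_\sigma \forces \{\delta\}^{\G^x}(n) = \underline{k}
\]
is $\Pi_1^1$ uniformly in $(n, \sigma, k)$. Here I am using that the map $\sigma \mapsto T_\sigma$ is uniformly hyperarithmetic from the hyperarithmetic tree $T$, so that $T_\sigma$ supplies a legitimate hyperarithmetic parameter to the forcing relation.

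Next, the hypothesis that $T$ decides $\{\delta\}^{\G^x}$ via $q$ says precisely that for every $(n, \sigma)$ with $\sigma \in \dom{T}$ of length $n$ there is a unique $k \in \{0,1\}$, namely $k = q(n,\sigma)$, such that $R(n,\sigma,k)$ holds. Consequently $q(n,\sigma) = 0 \iff R(n,\sigma,0)$ and $q(n,\sigma) = 0 \iff \neg R(n,\sigma,1)$, exhibiting the graph of $q$ as both $\Pi_1^1$ and $\Sigma_1^1$, hence $\Delta_1^1$, i.e., hyperarithmetic.

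There is no real obstacle here beyond sanity-checking uniformity: we need the operation $\sigma \mapsto T_\sigma$ to produce an index for a hyperarithmetic tree uniformly recursively in $\sigma$ (which follows directly from the definition $T_\sigma(\tau) = T(\sigma \concat \tau)$ together with hyperarithmeticity of $T$), and we need to know that ranked sentences fit inside the $\Sigma_1^1$ fragment to which the previous lemma applies. Both are immediate, so the only substantive content is the use of uniqueness to convert a $\Pi_1^1$ graph into a $\Delta_1^1$ one.
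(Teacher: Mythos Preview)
Your proof is correct and follows essentially the same approach as the paper: both observe that the forcing relation for ranked (hence $\Sigma_1^1$) sentences is uniformly $\Pi_1^1$, so $q$ has a $\Pi_1^1$ graph, and then use totality/uniqueness to conclude $q$ is $\Delta_1^1$. The paper's version is terser, simply invoking the standard fact that a total $\Pi_1^1$ function is hyperarithmetic, whereas you spell out the uniqueness-to-$\Delta_1^1$ step explicitly.
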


\begin{proof}
	As the forcing relation is uniformly $\Pi_1^1$ (as the formulas are ranked), $q$ is a total $\Pi_1^1$ function and; therefore, is hyperarithmetic.
\end{proof}

\begin{lem}
	Let $T$ be a condition, $x\in\Lat$, and $\delta<\omegaoneck$. Then there is an $S\leq_\P T$ which decides $\{\delta\}^{\G^x}$, and does not more root-coding than $T$
\end{lem}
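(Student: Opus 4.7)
The plan is to construct $S$ level-by-level using exactly the bookkeeping from the fusion lemma, but replacing the ``force $\phi_l$ at stage $l$'' step by ``for each $\sigma$ of length $l$, force a specific value of $\{\delta\}^{\G^x}(l)$ above $\sigma$.'' The key observation is that $\{\delta\}^{\G^x}(n) = \underline{k}$ is a ranked sentence of $\Lang[x]$, so the density property applies and yields, for any condition $T'$, an extension that decides the value and (by passing to the $x$-safe version as in Lemma \ref{samebrancheslemma}) does no more root-coding than $T'$. Thus the basic refinement step we need is already available.

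Concretely, fix a uniform hyperarithmetic enumeration $\sigma^l_0,\ldots,\sigma^l_{m(l)}$ of strings of length $l$ in $\prod_{i<l}\Theta_i$. Set $V(\emptyset)=T(\emptyset)$. At stage $l$, assume $V$ is defined up to level $l-1$ and that we have an auxiliary condition $U^{l-1}_{m(l-1)}$ refining $T$, whose root extends the height of $V$. Sweeping across the level, let $U^l_0$ be a density-produced refinement of $(U^{l-1}_{m(l-1)})^{V(\sigma^l_0)}$ that decides $\{\delta\}^{\G^x}(l)$ without adding root-coding; inductively, let $U^l_{k+1}$ be a density-produced refinement of $(U^l_k)^{V(\sigma^l_{k+1})}$ with the same property. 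Once $U^l_{m(l)}$ is reached, uniformity implies that for each $k$, $(U^l_{m(l)})^{V(\sigma^l_k)}$ is a subtree of $U^l_k$, and therefore still forces the value chosen there; define
\[
V(\sigma^l_k) = (U^l_{m(l)})^{V(\sigma^l_k)}(\emptyset).
\]
Setting $S=V$, the resulting tree is branch-coding-free, congruence-respecting, uniform, and hyperarithmetic by the same level-by-level verification as in the fusion lemma — each $U^l_k$ was chosen to add no coding beyond its predecessor, and the transfer-tree gluing is the same operation used there. Reading off the forced values defines a map $q:\omega\times\dom{S}\to\{0,1\}$ with $S_\sigma \forces \{\delta\}^{\G^x}(|\sigma|) = \underline{q(|\sigma|,\sigma)}$.

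The main obstacle is purely organizational: one must check that the level-by-level sweep does not increase root-coding at any substage and that the transfer-tree operation correctly propagates the forced value across all strings of a given length. Both are handled by appealing to the density property's ``no more root-coding'' refinement (which applies to ranked formulas directly, since the base case of density is established by taking $T_\sigma$ for sufficiently long $\sigma$ and the $x$-safe modification preserves ranked forcing by Lemma \ref{samebrancheslemma}) and to the uniformity axiom for trees, which guarantees that once a value is forced above some $V(\sigma^l_k)$, the final $U^l_{m(l)}$ transferred back over $V(\sigma^l_k)$ still forces it. There is no new combinatorial or descriptive-set-theoretic content beyond what the fusion lemma already provides.
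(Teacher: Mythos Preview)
Your proposal is correct and is essentially the paper's approach spelled out. The paper's proof is the single line ``This follows from the coding-free fusion lemma,'' and what you have done is unpack that claim: re-run the level-by-level sweep from the fusion construction, replacing the step ``find $R\leq_\P S$ forcing $\phi_l$'' by ``find $R\leq_\P S$ forcing a value of $\{\delta\}^{\G^x}(l)$,'' which is available by density for ranked formulas together with the $x$-safe modification. The resulting tree then has $V_\sigma$ a subtree of the relevant $U^l_k$, hence forcing the value decided there, which is exactly the ``decides via $q$'' condition.
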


\begin{proof}
	This follows from the coding-free fusion lemma.
\end{proof}

As our lattice representation $\{\Theta_i:i\in\omega\}$ is recursive and each $\Theta_i$ is a USL table, our map, $x\mapsto \text{degree}(\G^x)$, preserves $\sqleq$ and $\join$. We also need our map to be injective and to preserve $\not\sqleq$. Injectivity follows from preservation of $\not\sqleq$, so we concentrate on preserving $\not\sqleq$.

We want to show for each $x\not\sqleq y$ that $\G^x$ is not hyperarithmetic in $\G^{y}$. As $\G^{y}$ preserves $\omegaoneck$ for each $y\neq\top$ it suffices to show that $\G_x\notin\Model[y]$, i.e., that there is no term $\{\delta\}^{\G^{y}}$ in the language $\Lang[y]$ which defines $\G^x$. Clearly, we need not consider $y=\top$ as there is no corresponding $x$ not below $\top$.

\begin{lem}[Diagonalization]
	Let $x,y\in\Lat$ satisfy $x\not\sqleq_\Lat y$, let $\delta<\omegaoneck$, and let $T$ be a condition. Then there is an $n\in\omega$ and an extension of $T$ which does no more root-coding than $T$, decides the values of $\G^x(n)$ and of $\{\delta\}^{\G^{y}}(n)$, and decides them to be different.
\end{lem}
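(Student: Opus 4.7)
The plan is a standard diagonalization argument, adapted to avoid introducing any new root-coding. First I apply the preceding lemma to extend $T$ to a condition $T_1$ that decides $\{\delta\}^{\G^y}$ via some function $q$ (which is hyperarithmetic, by the earlier lemma), while doing no more root-coding than $T$. Since $x\not\sqleq y$ and our representation is coding-ready, the differentiation property outside $C$ supplies $\alpha,\beta\in\Theta_0\setminus C$ with $\alpha\equiv_y \beta$ and $\alpha\not\equiv_x\beta$.

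Set $n^{\ast} := |T_1(\emptyset)\concat\pi_0|$, the forking position at the root of $T_1$, and pick any $\tau'$ of length $n^{\ast}-1$ whose $i$th entry lies in $\Theta_{i+1}\setminus C$ (nonempty because $C$ is finite). Define $S_\alpha := T_{1,(\alpha)\concat\tau'}$ and $S_\beta := T_{1,(\beta)\concat\tau'}$. By the proposition on restriction trees, both are conditions in $\P$, and by branch-coding-freeness of $T_1$ combined with $\alpha,\beta,\tau'(i)\notin C$, the roots $T_1((\alpha)\concat\tau')$ and $T_1((\beta)\concat\tau')$ contain no coding elements beyond those already present in $T_1(\emptyset)$; hence $S_\alpha$ and $S_\beta$ do no more root-coding than $T$.

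Since $T_1$ is congruence-respecting and $(\alpha)\concat\tau'\equiv_y(\beta)\concat\tau'$, the argument from the proof of Lemma \ref{samebrancheslemma} shows that $S_\alpha$ and $S_\beta$ have the same branches modulo $y$, so by that lemma they force identical sentences of $\Lang[y]$. Since $(\alpha)\concat\tau'$ and $(\beta)\concat\tau'$ both have length $n^{\ast}$, the decides-via-$q$ hypothesis gives
$S_\alpha\forces \{\delta\}^{\G^y}(n^{\ast})=\underline{q(n^{\ast},(\alpha)\concat\tau')}$ and
$S_\beta\forces \{\delta\}^{\G^y}(n^{\ast})=\underline{q(n^{\ast},(\beta)\concat\tau')}$,
and the forcing-equivalence just obtained forces these two values to coincide, say with common value $v$.

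On the other hand, uniformity together with the nonorder property gives $T_1((\alpha))(n^{\ast})=\alpha$ (the first character of $\rho_{0,\alpha}$), so every branch of $S_\alpha$ has $\G(n^{\ast})=\alpha$ and therefore $S_\alpha\forces\G^x(n^{\ast})=\underline{\alpha(x)}$; symmetrically $S_\beta\forces\G^x(n^{\ast})=\underline{\beta(x)}$. Since $\alpha\not\equiv_x\beta$, we have $\alpha(x)\neq\beta(x)$, so at least one of these values differs from $v$, and the corresponding one of $S_\alpha,S_\beta$ is the required extension $S$, with $n=n^{\ast}$. The only real subtlety is ensuring that no new root-coding is introduced: this is arranged in the first step by the coding-free form of the preceding lemma, and in the descent to $S_\alpha$ (or $S_\beta$) by the branch-coding-freeness of $T_1$ combined with the choice of $\alpha,\beta,\tau'$ outside $C$.
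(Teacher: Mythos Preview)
Your proof is correct and follows essentially the same approach as the paper's: pass to an extension deciding $\{\delta\}^{\G^y}$, pick differentiating elements $\alpha,\beta\in\Theta_0\setminus C$, and compare two subtrees that agree modulo $y$ but disagree at the forking position modulo $x$. The only cosmetic difference is that the paper uses the constant strings $\alpha^{n}$ and $\beta^{n}$ where you use $(\alpha)\concat\tau'$ and $(\beta)\concat\tau'$ with a common non-coding tail $\tau'$; both choices avoid introducing root-coding and both yield subtrees with the same branches modulo $y$, so the arguments are interchangeable.
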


\begin{proof}
	Firstly, as $y$ is not above $x$, $y\neq \top$ and so $\Lang[y]$ and $\Model[y]$ are defined. We may assume $T$ decides $\{\delta\}^{\G^y}$ via $q$ (possibly by replacing $T$ with an extension doing no more root-coding), and we fix $\alpha,\beta\in\Theta_0\setminus C$ differentiating $x$ and $y$, i.e., $\alpha\equiv_y \beta$ yet $\alpha\not\equiv_x \beta$.
	
	By the uniformity of $T$ there is a string $\pi$ such that $T(\alpha)\supseteq T(\emptyset)\concat\pi\concat \alpha$ and $T(\beta)\supseteq T(\emptyset)\concat\pi\concat\beta$. In particular, if $n = |T(\emptyset)\concat\pi|$ then $T_\alpha(\emptyset)(n)= \alpha$ and $T_\beta(\emptyset)(n) = \beta$. Consequently, every branch of $T_\alpha$ disagrees with every branch of $T_\beta$ at $n$ modulo $x$.
	
	Let $\alpha^n$ be $n$ many copies of $\alpha$ and define $\beta^n$ similarly. As $T$ decides $\{\delta\}^{\G^y}$ via $q$ 
		\[
			T_{\alpha^n}\forces \{\delta\}^{\G^y}(n) = q(n,\alpha^n)\text{ and } T_{\beta^n}\forces \{\delta\}^{\G^y}(n) = q(n,\beta^n).
		\]
	I claim that, further, $q(n,\alpha^n)=q(n,\beta^n)$. To see this, note that as $\alpha\equiv_y \beta$ then $T_{\alpha^n}$ and $T_{\beta^n}$ have the same branches modulo $y$. This implies, by Lemma \ref{samebrancheslemma}, that $T_{\alpha^n}$ and $T_{\beta^n}$ force precisely the same sentences of $\Lang[y]$, which establishes the claim.
	
	In summary, $T_{\alpha^n}$ and $T_{\beta^n}$ force the same value of $\{\delta\}^{\G^y}(n)$ yet force different values of $\G^x(n)$. As such, at least one of $T_{\alpha^n}$ or $T_{\beta^n}$ diagonalizes against the $\delta$th reduction. Neither tree does more root-coding than $T$, as $T$ is branch coding free and neither $\alpha$ nor $\beta$ are in $C$, hence we have established the existence of the desired condition.
	\label{diagonalizinglemma}
\end{proof}

So, repeatedly applying the above Lemma, we can force our map to be an uppersemilattice embedding. We could force the preservation of meets by meeting appropriate dense sets, but we get it for free provided we can force the embedding to be an almost-initial-segment. To this end we need to establish the existence of splitting subtrees, which is considerably more complicated than anything we have done so far.

\begin{defn}
	For each reduction $\delta$ and condition $T$ deciding $\{\delta\}^{\G^x}$ via $q$, we say that $\sigma$ and $\tau$ (of the same length) are {\bf$(\delta,x)$-splitting on $T$ (modulo $y$)} if $(\sigma\equiv_y \tau$ and) there is an $n\leq|\sigma|$ such that $q(n,\sigma\rest n) \neq q(n,\tau\rest n)$.
\end{defn}

\begin{lem}
	Let $\delta$ be a reduction and $T$ a condition deciding $\{\delta\}^{\G^x}$ (where $x\neq \top$). There is a $\rho$ such that the set
		\[
			\text{Sp}(\rho) = \{y\in\Lat : \text{there are no $\sigma,\tau$ that $(\delta,x)$-split on $T_\rho$ modulo $y$}\}	
		\]
	is maximal. Moreover, this set is closed under meet, and so has a least element, and we can choose such a $\rho$ which does no coding.
\end{lem}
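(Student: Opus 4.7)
The argument has three parts: finding $\rho$ with $\text{Sp}(\rho)$ maximal, establishing meet-closure of $\text{Sp}(\rho)$ at that $\rho$, and arranging that $\rho$ does no coding. The first part rests on monotonicity of $\text{Sp}$ under the prefix order: if $\rho \subseteq \rho'$ then $\text{Sp}(\rho) \subseteq \text{Sp}(\rho')$. Given $y \in \text{Sp}(\rho)$ and a putative splitting $\sigma', \tau' \in \dom{T_{\rho'}}$ modulo $y$ at position $n$, the lifted strings $\sigma = (\rho' \setminus \rho) \concat \sigma'$ and $\tau = (\rho' \setminus \rho) \concat \tau'$ lie in $\dom{T_\rho}$, agree modulo $y$, and satisfy $(T_\rho)_{\sigma \rest m} = (T_{\rho'})_{\sigma' \rest n}$ for $m = n + |\rho'| - |\rho|$. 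This common condition forces $\{\delta\}^{\G^x}(n)$ to both $q_{\rho'}(n, \sigma' \rest n)$ and, via the extension property applied to the prefix $\sigma \rest n$, $q_\rho(n, \sigma \rest n)$, so these agree; the same identity holds for $\tau$, and the splitting descends to $T_\rho$, contradicting $y \in \text{Sp}(\rho)$. Since $\Lat$ is finite, $\mathcal{P}(\Lat)$ is finite, and a maximal $\rho$ exists.

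For meet-closure at the maximal $\rho$, suppose $y, y' \in \text{Sp}(\rho)$, let $z = y \meet y'$, and assume toward contradiction $z \notin \text{Sp}(\rho)$, witnessed by $\sigma, \tau \in \dom{T_\rho}$ of length $l$ splitting at position $n$. Applying the meet interpolant property coordinatewise yields, for each $i < l$, elements $\gamma_0^i, \gamma_1^i, \gamma_2^i \in \Theta_{|\rho|+i+1}$ with
\[
\sigma(i) \equiv_y \gamma_0^i \equiv_{y'} \gamma_1^i \equiv_y \gamma_2^i \equiv_{y'} \tau(i).
\]
The interpolants naturally sit one level higher than $\sigma, \tau$ directly require, so I transfer the argument to a shifted tree $T_{\rho \concat c}$ (for some $c \in \Theta_{|\rho|}$) in which all of $\sigma, \gamma_0, \gamma_1, \gamma_2, \tau$ are valid strings (the $\gamma_j$ because their coordinates lie in $\Theta_{|\rho|+i+1}$, matching the domain of $T_{\rho \concat c}$ at position $i$; and $\sigma, \tau$ by the nestedness $\Theta_i \subseteq \Theta_{i+1}$). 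By monotonicity and maximality, $\text{Sp}(\rho \concat c) = \text{Sp}(\rho)$, so $y, y' \in \text{Sp}(\rho \concat c)$, and no two strings agreeing modulo $y$ (or modulo $y'$) on $T_{\rho \concat c}$ can split. The chain $\sigma, \gamma_0, \gamma_1, \gamma_2, \tau$ therefore forces a single $q_{\rho \concat c}$-value along it. Combining this with the identification $(T_{\rho \concat c})_{\sigma \rest n} = (T_\rho)_{c \concat \sigma \rest n}$ and the extension property pulls the resulting equality back to $q_\rho(n, \sigma \rest n) = q_\rho(n, \tau \rest n)$, contradicting the splitting. Hence $z \in \text{Sp}(\rho)$.

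Finally, $\top \in \text{Sp}(\rho)$ trivially (by the order property of USL tables, any two elements of $\Theta_i$ agreeing at $\top$ must coincide, so strings agreeing modulo $\top$ are identical and no splitting modulo $\top$ can exist), so $\text{Sp}(\rho)$ is a nonempty, finite, meet-closed subset of $\Lat$ and has a least element. For the no-coding clause, I further extend $\rho$ by coordinates drawn from $\Theta_i \setminus C$ (available by property~(4) of the coding-ready representation); monotonicity and maximality preserve $\text{Sp}$, and branch-coding-freeness of $T$ ensures that the resulting $\rho$ does no coding. The principal obstacle is the bookkeeping around the level-shift in the meet-interpolant step—interpolants lie in $\Theta_{|\rho|+i+1}$ rather than $\Theta_{|\rho|+i}$, so the meet-closure argument must be run on a shifted tree and the resulting equality of $q$-values carefully transported back to $T_\rho$ via identifications of forcing conditions.
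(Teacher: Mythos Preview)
Your monotonicity and meet-closure arguments are essentially the paper's, and the level-shift you identify is exactly the ``technical reason'' the paper cites for passing to $T_{\rho\concat 0}$. One bookkeeping slip: the domain of every uniform tree is the same set of strings $\sigma$ with $\sigma(i)\in\Theta_i$, so the interpolants for $\sigma(i),\tau(i)\in\Theta_i$ lie in $\Theta_{i+1}$, not $\Theta_{|\rho|+i+1}$, and passing to $T_{\rho\concat c}$ does not change which strings are admissible. The correct implementation of your shift is to \emph{prepend} a symbol to the strings (the paper uses $0\concat\sigma$, $0\concat\hat\gamma_k$, $0\concat\tau$) and then run the chain argument on $T_\rho$ directly; your transport of $q$-values between $T_\rho$ and the shifted tree then becomes the observation that $(T_\rho)_{0\concat\sigma} = (T_{\rho\concat 0})_\sigma$.

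The genuine gap is in the no-coding clause. Extending $\rho$ by coordinates in $\Theta_i\setminus C$ does not remove coding already present in $\rho$: if $\rho(j)\in C$ for some $j$, then every extension $\rho'\supseteq\rho$ still has $\rho'(j)\in C$, and branch-coding-freeness of $T$ guarantees precisely that $T(\rho')$ \emph{will} record that coding value at the corresponding branching height. So your appeal to branch-coding-freeness works against you here. The paper's fix is different in kind: replace $\rho$ by its $x$-safe version $\rho_x$, which swaps each coding coordinate for a non-coding one congruent to it modulo $x$ (property~(4) of the coding set). Since $T_\rho$ and $T_{\rho_x}$ then have the same branches modulo $x$, Lemma~\ref{samebrancheslemma} gives that they force the same $\Lang[x]$-sentences, hence decide $\{\delta\}^{\G^x}$ via the same $q$, hence $\text{Sp}(\rho)=\text{Sp}(\rho_x)$. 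This is where property~(4) is actually used; your invocation of it to supply extension coordinates does not do the job.
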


\begin{proof}
	As $\Lat$ is finite there is clearly a $\rho$ such that $\text{Sp}(\rho)$ is maximal. I claim that if we replace $\rho$ by its $x$-safe version then $\text{Sp}(\rho_x)$ is still maximal. To see this, it suffices to observe that, as $T_\rho$ and $T_{\rho_x}$ have the same branches modulo $x$, then they both decide $\{\delta\}^{\G^x}$ via the same map $q$, hence $\sigma,\tau$ $(\delta,x)$-split on $T_\rho$ iff they $(\delta,x)$-split on $T_{\rho_x}$. Thus, $\text{Sp}(\rho) = \text{Sp}(\rho_x)$, and one is maximal iff the other is.
	
	Now we need to show $\text{Sp}(\rho)$ is closed under meet. Suppose $y,z\in \text{Sp}(\rho)$, we want to show that there are no $(\delta,x)$-splits on $T_{\rho\concat 0}$ modulo $y\meet z$ (here we extend $\rho$ by one place for technical reasons). Suppose there was such a split $\sigma$ and $\tau$. By the existence of meet-interpolants there are $\hat{\gamma_1},\hat{\gamma_2},\hat{\gamma_3}\in\prod_{i=1}^{|\sigma|+1}\Theta_i$ such that for each $0<j<|\sigma|+1$ $\hat{\gamma_1}(j),\hat{\gamma_2}(j),\hat{\gamma_3}(j)$ are meet interpolants for $\sigma(j)$ and $\tau(j)$. In particular,
		\[
			\sigma \equiv_y \hat{\gamma_1}\equiv_z \hat{\gamma_2}\equiv_y \hat{\gamma_3} \equiv_z \tau.
		\]
	Now as $\sigma$ and $\tau$ form a $(\delta,x)$-split on $T_{\rho\concat 0}$ then so too do one of the consecutive pairs, listed above. But then, supposing it is the first pair, $0\concat\sigma$ and $0\concat\hat{\gamma_1}$ forms a $(\delta,x)$-split modulo $y$, contradicting the fact that $y\in\text{Sp}(\rho)$. The other pairs are similar, and so there are no $y\meet z$ splits on $T_\rho$ as required.
\end{proof}

Using the above lemma we construct the splitting subtrees:

\begin{lem}
	Let $\delta$ be a reduction and $T$ a condition deciding $\{\delta\}^{\G^x}$ (where $x\neq \top$) via $q$, let $\rho$ be a string such that $\text{Sp}(\rho)$ is maximal yet $\rho$ does no coding, and let $z$ be the least element of $\text{Sp}(\rho)$. Then there is a condition $S$ extending $T$ and doing no more root-coding than $T$ such that for any $\sigma,\tau$ if $\sigma\not\equiv_z\tau$ then $\sigma$ and $\tau$ $(\delta,x)$-split on $S$. We call such an $S$ a $z-(\delta,x)$-splitting tree.
\end{lem}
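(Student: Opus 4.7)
The plan is to construct $S$ as a hyperarithmetic subtree of $T_\rho$ by level-by-level recursion, using the homogeneity interpolants of the coding-ready representation to enforce the splitting property. The base case sets $S(\emptyset) := T(\rho)$, which introduces no root-coding beyond what $T$ already has, since $\rho$ does no coding. In the inductive step, having defined $S$ through level $l$ as a branch-coding-free, congruence-respecting, uniform subtree of $T_\rho$ for which all non-$\equiv_z$ pairs in $\dom S$ at levels $\leq l$ already $(\delta,x)$-split, I extend to level $l+1$ to preserve these properties and the splitting requirement.

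The heart of the argument is the following splitting claim: for any $\nu \in \dom{T_\rho}$ and any $\alpha_0 \neq \alpha_1$ in $\Theta_{l+1}$ with $\alpha_0 \not\equiv_z \alpha_1$, there exist coding-free extensions $\nu_0, \nu_1 \in \dom{T_\rho}$ of $\nu \concat \alpha_0, \nu \concat \alpha_1$ (of the same length) such that $(T_\rho(\nu_0), T_\rho(\nu_1))$ is a $(\delta,x)$-split. I begin by observing that, by the order property applied to any hypothetical split modulo $y'$, $\text{Sp}(\rho)$ is upward-closed, so together with the previous lemma's conclusion that $z$ is its least element, $\text{Sp}(\rho) = \{y : y \sqgeq z\}$. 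Since $\alpha_0 \not\equiv_z \alpha_1$ and $z \neq \bot$, I extend $\nu \concat \alpha_0, \nu \concat \alpha_1$ to $\mu_0, \mu_1$ disagreeing modulo some $y$ with $y \not\sqgeq z$, using table elements supplied by the differentiation property outside $C$. Because $y \notin \text{Sp}(\rho)$, $T_\rho$ admits a $(\delta,x)$-split $(\eta_0, \eta_1)$ modulo $y$; the three $\Lat$-homomorphisms $f, g, h$ from the homogeneity interpolants (property (2) of sequential representation) then allow me to chain extensions of $\mu_0, \mu_1$ on $T_\rho$ that copy the splitting behaviour of $(\eta_0, \eta_1)$. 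Acceptability condition (5), together with the constraint that $f, g, h$ take values in $C$ only when their targets are already in $C$, ensures the transfer can be carried out outside $C$, so no new root-coding is introduced.

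With the splitting claim established, I will complete the level-$(l+1)$ definition of $S$ by fixing a concrete splitting extension per non-$\equiv_z$ pair in $\Theta_{l+1}^2$ and propagating it across all $\sigma$ of length $l$ via the required uniformity pattern, using the congruence-respecting property of the homogeneity interpolants to preserve the tree structure. The resulting $S$ is hyperarithmetic since the construction is uniform in the hyperarithmetic data $T, q, \rho, \{\Theta_i\}$. The main obstacle is the splitting claim itself: the bookkeeping required to thread the chain of homogeneity-interpolant applications so that the copied split lands on extensions of the target pair $(\mu_0, \mu_1)$, while respecting the acceptability constraint on coding. A secondary challenge is handling lattices where every nontrivial element lies above $z$ (so no nontrivial $y \not\sqgeq z$ exists), in which case one must instead apply homogeneity interpolants to mediate directly between $(\alpha_0, \alpha_1)$ and a plain split pair without first creating a disagreement modulo some $y \not\sqgeq z$.
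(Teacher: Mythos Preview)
Your overall architecture is right—level-by-level construction with $S(\emptyset)=T(\rho)$, iterating over pairs, and using homogeneity interpolants to propagate splits while staying coding-free—but the core splitting claim is set up incorrectly, and the gap is exactly where the argument is delicate.

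The precondition for applying homogeneity interpolants (property (2)) to a source pair $\alpha_0,\alpha_1$ with target pair $\beta_0,\beta_1$ is that $\alpha_0\equiv_w\alpha_1$ implies $\beta_0\equiv_w\beta_1$ for \emph{every} $w\in\Lat$. In the application, the targets $\beta_0,\beta_1$ are the values $\sigma(s),\tau(s)$ of a split modulo $y$, so one only knows $\sigma\equiv_y\tau$; hence the precondition becomes: whenever $\alpha_0\equiv_w\alpha_1$, one must have $w\sqleq y$. Your plan of extending to $\mu_0,\mu_1$ that \emph{disagree} modulo some $y\not\sqgeq z$ does nothing to guarantee this—creating a disagreement at one $y$ does not bound the set of $w$ where $\mu_0,\mu_1$ still agree. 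The paper's move is different and essential: take $y$ to be the \emph{largest} $w$ with $\alpha_p\equiv_w\alpha_q$ (this exists since the set of such $w$ is closed under join). Maximality gives the homogeneity precondition directly, and $\alpha_p\not\equiv_z\alpha_q$ forces $z\not\sqleq y$, so $y\notin\text{Sp}(\rho)$ and a split modulo $y$ exists. No preliminary extension via differentiation is needed, and your ``secondary challenge'' (the case where no nontrivial element fails to lie above $z$) evaporates: the maximal agreement element $y$ always works.

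There is also a case distinction you have not accounted for. Having found a split $\sigma,\tau$ modulo $y$, one first checks whether $\nu_q\concat\tau$ already splits with $\nu_p\concat\tau$; if so, one can uniformly extend every branch by the $x$-safe version $\tau_x$ and avoid homogeneity interpolants entirely. Only when $\nu_q\concat\tau$ splits with $\nu_p\concat\sigma$ are the interpolants $f_s,g_s,h_s$ invoked, and then one must further argue (via property (5) of acceptability for $A$, applied with a coatom $x'\sqgeq x$) that $\sigma,\tau$ themselves can be chosen coding-free, so that the constraint ``$f_s(\alpha)\in C$ only if $\alpha\in\{\alpha_0,\alpha_1\}$'' actually prevents coding values from appearing in the extension.
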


\begin{proof}
	We define $S$ inductively, level by level. We begin with $S(\emptyset) = T(\rho)$, which, by choice of $\rho$, does no more root-coding than $T$. Now suppose we have defined $S(\sigma)=T(\tau_\sigma)$ for each $\sigma$ of length $l$. We must define $S(\sigma\concat\alpha)$ for all such $\sigma$ and $\alpha\in\Theta_l$ in a  congruence-respecting, branch-coding-free, and uniform fashion, across this level.
	
	List the strings of length $l+1$ as $\sigma_j\concat\alpha_j$ for $j<m= \left| \prod_{i=0}^{l}\Theta_i\right|$. We define by a subinduction on $r< m(m-1)/2$ strings $\rho_{j,r}$ (simultaneously for $j<m$) and we will set
		\[
			\tau_{\sigma_j\concat\alpha_j} = \tau_{\sigma_j}\concat\alpha_j\concat\rho_{j,0}\concat\cdots\concat\rho_{j,m(m+1)/2}.
		\]
	We maintain uniformity by ensuring that $|\rho_{j,r}|=|\rho_{j',r}|$ for each $j,j'$,  we respect-congruences by insisting if $\alpha_j\equiv_y\alpha_{j'}$ then $\rho_{j,r}\equiv_y \rho_{j',r}$ for each $r$ and y, and we do no unnecessary coding by ensuring that each $\rho_{j,r}$ never takes on a coding value. Provided this is all effective, we will have a condition at the end.
	
	By induction on $r<m(m+1)/2$ suppose we have $\tau_{j}\concat\alpha_j\concat\rho_{j,0}\concat\cdots\concat\rho_{j,r-1} = \nu_j$ for all $j<m$. Suppose $\{p,q\}$ is the pair of distinct numbers both less than $m$ numbered by $r$. We wish to force a split corresponding to $\alpha_p$ and $\alpha_q$ if necessary. If $\alpha_p\equiv_z \alpha_q$ then we need not force a split, and so we can define $\rho_{j,r} = \emptyset$ for each $j<m$.
	
	Otherwise, let $y$ be the largest $w\in\Lat$ such that $\alpha_p\equiv_w\alpha_q$, of course $z\not\sqleq y$. By choice of $z$, there are $\sigma,\tau$ such that $\nu_p$ extended by $\sigma$ and $\tau$ form a $(\delta,x)$-splitting modulo $y$ on $T_\rho$. Consequently, $\nu_q\concat \tau$ must also $(\delta,x)$ split with one of $\nu_p\concat\alpha$ and $\nu_p\concat\tau$. If it splits with $\nu_p\concat\tau$, then we set $\rho_{j,r+1} = \tau_x$ the $x$-safe version of $\tau$. This is uniform and congruence-respecting (because we are picking the same extension for each $j$) and, furthermore, $\nu_p\concat\tau_x$ and $\nu_q\concat\tau_x$ still form a $(\delta,x)$-split, because $\nu_p\concat\tau_x\equiv_x \nu_p\concat \tau$ and so they force the same values for $\{\delta\}^{\G^x}$ wherever defined.
	
	Now suppose $\nu_q\concat\tau$ splits with $\nu_p\concat\sigma$. If $\alpha_p\equiv_w\alpha_q$ then $w\sqleq y$ by maximality of $y$, and so $\sigma\equiv_w \tau$, as $\sigma\equiv_y \tau$. We pick homogeneity interpolants $\gamma_0(s),\gamma_1(s)$ in $\Theta_{s+1}$ and $\Lat$-homomorphisms $f_s,g_s,h_s:\Theta_s\rightarrow\Theta_{s+1}$ such that
		\[
			f_s:\alpha_p,\alpha_q\mapsto \sigma(s),\gamma_1(s),\quad g_s:\alpha_p,\alpha_q\mapsto \gamma_0(s),\gamma_1(s),\quad h_s:\alpha_p,\alpha_q\mapsto \gamma_0(s),\tau(s).
		\]
	As $\nu_p\concat\sigma$ and $\nu_q\concat\tau$ $(\delta,x)$-split on $T_\rho$ one of the pairs $\nu_p\concat\sigma,\nu_q\concat\hat{\gamma_1}$, or $\nu_p\concat\hat{\gamma_0},\nu_q\concat\hat{\gamma_1}$, or $\nu_p\concat\hat{\gamma_0},\nu_q\concat\tau$ must $(\delta,x)$-split on $T_\rho$ too. We set $\rho_{j,r+1}(s) = f_s(\alpha_j)$ or $g_s(\alpha_j)$ or $h_s(\alpha_j)$ corresponding to which pair splits. This is uniform as $\rho_{j,r+1}$ depends only on $\alpha_j$, and as each $f_s,g_s,h_s$ are $\Lat$-homomorphisms we respect-congruences.
	
	The final thing to show is that we have not done unnecessary coding. Well, as $\{\Theta_i:i\in\omega\}$ is coding-ready, then it is acceptable for $A$, so by definition $f_s(\alpha)\in C$ implies that $\alpha = \sigma(s)$ or $\alpha = \tau(s)$ (and the same for $g_s,h_s$). So, it suffices to show that we can pick $\sigma$ and $\tau$ which themselves do no coding.
	
	As $\nu_p = \tau_{\sigma_j}\concat \alpha_j\concat\rho_{j,0}\concat\cdots\concat\rho_{j,r-1}$ then $|\nu_p|>0$, therefore the $\sigma,\tau$ we are trying to pick live in $\prod_{i=k}^{i=k'}\Theta_i$ for some $k'>k>0$. Consequently, if $\sigma(s)\in C$ then $\sigma(s)\in\Theta_0 \subseteq \Theta_{k+s}^\ast$ (similarly for $\tau(s)$) and so by the definition of acceptable for $A$, there are $\sigma'(s)$ and $\tau'(s)\in\Theta_{k+s}\setminus\Theta_{k+s}^\ast$ such that
		\[
			\sigma(s)\equiv_{x'}\sigma'(s),\tau(s)\equiv_{x'}\tau'(s)\text{, and for all $w\in\Lat$}[\sigma(s)\equiv_w\tau_s\Rightarrow \sigma'(s)\equiv_w\tau'(s)].
		\]
	If we pick a coatom $x'>x$ then we can construct $\sigma',\tau'$ which do not take coding values and such that $\sigma'\equiv_{x'}\sigma$ and $\tau'\equiv_{x'}\tau$, and, therefore, which still form a $(\delta,x)$-split on $T_\rho$ modulo $y$. Thus, when we picked $\sigma,\tau$ we could have picked them to do no coding, and then nothing else can code as $\{\Theta_i:i\in\omega\}$ is acceptable for $A$.
\end{proof}

\begin{lem}
	If $T$ is a $z-(\delta,x)$-splitting tree, then $T$ forces $\{\delta\}^{\G^x}\equiv_h \G^z$.
\end{lem}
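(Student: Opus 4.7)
My plan is to extract two symmetric hyperarithmetic reductions from a single biconditional characterization of $(\delta,x)$-splits on $T$ in terms of $\equiv_z$. Before chasing the reductions, I would first record the following: for any $\sigma,\tau \in \dom{T}$ of equal length, $\sigma$ and $\tau$ $(\delta,x)$-split on $T$ if and only if $\sigma \not\equiv_z \tau$. The $(\Leftarrow)$ direction is the defining property of a $z$-$(\delta,x)$-splitting tree. The $(\Rightarrow)$ direction requires peeking back into the previous lemma's construction: $T$ was built as a refinement of $T_\rho$ for a string $\rho$ with $z \in \text{Sp}(\rho)$, so by definition of $\text{Sp}$ no $\equiv_z$-equivalent pair on $T_\rho$ $(\delta,x)$-splits, and the forcing decisions $q$ are inherited by the refinement $T$.

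Given the biconditional, I would establish $T \forces \{\delta\}^{\G^x} \leq_h \G^z$ by giving an explicit algorithm: on input $n$, search hyperarithmetically (using that $T$ and $q$ are both hyperarithmetic) for any $\sigma \in \dom{T}$ of length $n+1$ with $\sigma(m)(z) = \G^z(m)$ for all $m \leq n$, and output $q(n, \sigma \rest n)$. The true initial segment $\G \rest (n+1)$ witnesses existence. For correctness, any such $\sigma$ is $\equiv_z$-equivalent to $\G \rest (n+1)$, so by the biconditional the two do not $(\delta,x)$-split on $T$, giving $q(n, \sigma \rest n) = q(n, \G \rest n) = \{\delta\}^{\G^x}(n)$.

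For the reverse reduction $T \forces \G^z \leq_h \{\delta\}^{\G^x}$, I would mirror the argument. Taking $f := \{\delta\}^{\G^x}$ as oracle, on input $n$ search for any $\sigma \in \dom{T}$ of length $n+1$ with $q(m, \sigma \rest m) = f(m)$ for all $m \leq n+1$, and output $\sigma(n)(z)$. Existence is witnessed by $\G \rest (n+1)$. Since $\sigma$ and $\G \rest (n+1)$ agree on every $q$-value of an initial segment, they do not $(\delta,x)$-split on $T$; by the biconditional $\sigma \equiv_z \G \rest (n+1)$, so $\sigma(n)(z) = \G(n)(z) = \G^z(n)$. Combined with the preservation of $\omegaoneck$ recorded just above, these searches give genuine $\leq_h$-reductions.

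The main obstacle is really bookkeeping in the $(\Rightarrow)$ half of the biconditional: one has to verify that the subtree construction in the previous lemma genuinely inherits the no-$\equiv_z$-split property of $T_\rho$, and that the forcing-decision map $q$ transfers consistently to the constructed tree. Everything downstream is a symmetric hyperarithmetic search witnessed by the true generic initial segments, and the two halves of $\equiv_h$ fall out together.
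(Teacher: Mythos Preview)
Your overall strategy matches the paper's, and isolating the biconditional explicitly (the paper only gestures at it with ``by the choice of $z$'') is a nice clarification. However, your algorithms conflate the generic $\G$, which is a \emph{branch} of $T$ and hence lives in the range, with the domain path that $T$ sends to $\G$. The map $q$ is keyed to domain strings: the true value $\{\delta\}^{\G^x}(n)$ is $q(n,\hat\sigma)$ where $\hat\sigma$ is the unique length-$n$ domain string with $T(\hat\sigma)\subset \G$, not $q(n,\G\rest n)$. Your search condition $\sigma(m)(z)=\G^z(m)$ and your proposed witness $\G\rest(n+1)$ sit on the wrong side of $T$; there is no reason for $\G\rest n \equiv_z \hat\sigma$, so the output $q(n,\G\rest n)$ need not equal $\{\delta\}^{\G^x}(n)$.

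The paper's search instead asks for domain strings $\sigma$ with $T(\sigma)$ agreeing with $\G$ modulo $z$; the nonorder property then forces all such $\sigma$ to be mutually $\equiv_z$, and the true $\hat\sigma$ is among them, so $q$ returns the correct value. Symmetrically, in the reverse reduction the output must be read from $T(\sigma)$ rather than from $\sigma$, since it is $T(\sigma)$ (via congruence-respecting) that determines an initial segment of $\G$ modulo $z$. Once you route both searches through $T$ in this way, your argument becomes exactly the paper's.
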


\begin{proof}
	Fix $\G \in[T]$. We first show $\G^z\geq_h \{\delta\}^{\G^x}$. Pick an $n$, using $\G^z$ find all $\sigma\in\dom{T}$ of length $n$ such that $T(\sigma)(m)(z) \equiv_z\G^z(m)$ for all $m\leq n$, i.e, narrow down the possible paths through $T$ to those consistent with $\G^z$. All these $\sigma$ are equivalent modulo $z$ and so each $T_\sigma$ forces the same value of $\{\delta\}^{\G^x}(n)$, by the choice of $z$. As $T(\sigma)$ is an initial segment of $\G$ for one of these $\sigma$, then $\{\delta\}^{\G^x}(n)$ must be the correct value for that $\sigma$, and hence, all such $\sigma$.
	
	Now we compute the other way. Given $\{\delta\}^{\G^x}$ consider all $\sigma,\tau\in\dom{T}$ of length $n$. If $\sigma\not\equiv_z \tau$ then $\sigma$ and $\tau$ form a $(\delta,x)$-split on $T$ and so, in particular, $T_\sigma$ and $T_\tau$ force different values for $\{\delta\}^{\G^x}$ at some $m<n$. Thus, of the $\equiv_z$ equivalence classes of $\sigma$s and $\tau$s of length $n$, only the correct one will force the correct value of $\{\delta\}^{\G^x}$, so we can rule out all of the incorrect ones, as $T$ and $q$ are hyperarithmetic, leaving us with a single $\equiv_z$ equivalence class, these all determine the same initial segment of $\G$ modulo $z$, and so it must be correct modulo $z$.
\end{proof}

This is the penultimate step on our way to Theorem \ref{maintheorem1}: We know we can construct an embedding of $\Lat$ into $\Degh$ such that $\Lat\setminus\{\top\}$ is an initial segment. We do this by diagonalizing against each hyperarithmetic reduction $\delta$ for each pair $x\not\sqleq y\in \Lat$ and by constructing splitting trees. This is only countably many requirements so we can satisfy each in turn. We also need to decide each sentence of $\Lang[x]$ for each $x\in\Lat\setminus\{\top\}$ and to preserve $\omega_1^{CK}$ for each such $x$.

We have proved that we can do all this without ever doing coding at the root of a condition, consequently, we can intersperse these requirements with requirements saying: 

If $n$ is the first place we are yet to code for the pair $x,y$ joining up nontrivially to $\top$, then if $n\in\O$ take $T_{g(x,y,1)}$ and if $n\not\in\O$ take $T_{g(x,y,0)}$ as the next condition (where $g$ is the function in the definition of coding set). 

This implements our coding scheme and so $\G^\top\geq_h \O$. How do we guarantee that $\G^\top\leq_h \O$? It suffices to construct a generic sequence hyperarithmetically in $\O$. But this is only so much checking: The notion of forcing and the extension relation are hyperarithmetic in $\O$, as are the languages $\Lang[b]$. Also, the various constructions we effected can all be made uniformly hyperarithmetic in $\O$ by always picking "least" strings or extensions doing various things, with some fixed hyperarithmetic enumeration of strings.

The reader may also note that we could code in any set for $\top$, and provided that set $X$ is hyperarithmetically above $\O$ then there is a generic $\G$ with top element having the same hyperdegree as $X$.

\section{Extending embeddings}

In this section we want to establish Theorem \ref{maintheorem2}, which says that if $\U$ and $\V$ are USL$^\top$s and $\U$ is an almost-initial-segment of $\V$, then every embedding of $\U$ into $\Degh(\leq\O)$ extends to one of $\V$ (here, embedding means an injective map, preserving $\join$, mapping $\bot$ to the degree of the hyperarithmetic sets, and $\top$ to the degree of $\O$). Our strategy is to prove the result for two special cases and to show that, together, these imply the full result.

\begin{defn}[Free extension]
	For a USL$^\top$ $\U$, and a set $X$ disjoint from $\U$ the \textbf{$\top$-preserving free extension of $\U$ by $X$} ($\U[X]$) is the USL$^\top$ with domain
		\[
			V = ((\U\setminus\{\top\})\times [X]^{<\omega})\cup \{\top\},
		\]
	with the partial order $\sqleq_\V$ defined by setting $\top$ to be greater than everything, and declaring $(u_1,X_1)\sqleq_\V (u_2,X_2)$ for $u_1,u_2\in U$ and $X_1,X_2$ finite subsets of $X$ iff $u_1\sqleq_\U u_2$ and $X_1\subseteq X_2$. Its least element is $(\bot,\emptyset)$, its greatest element is $\top$, and $\join_\V$ is defined by
		\[
			(u_1,X_1)\join_\V (u_2,X_2) = (u_1\join_\U u_2, X_1\cup X_2)
		\]
	provided $u_1\join_\U u_2 \neq \top$ and is $\top$ otherwise.
\end{defn}

\begin{prop}
	$\V$ as defined above is a USL$^\top$, and there is a natural embedding from $\U$ to $\V$ taking $\top_\U$ to $\top_\V$ and taking any other $u\in\U\setminus\{\top\}$ to $(u,\emptyset)$, which realizes $\U$ as an almost-initial-segment of $\V$.
\end{prop}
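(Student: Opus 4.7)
The plan is to verify the USL$^\top$ axioms for $\V$ directly, then to check the naturality and injectivity of $\iota: u \mapsto (u,\emptyset)$ (with $\iota(\top_\U) = \top_\V$), and finally to confirm the almost-initial-segment condition. Since the structure on $\V \setminus \{\top\}$ is essentially the product of $\U \setminus \{\top\}$ with $([X]^{<\omega}, \subseteq)$, and $\top$ is appended as an absorbing maximum, every verification reduces to coordinatewise checks plus a small case analysis for $\top$.

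For the USL$^\top$ axioms, reflexivity, antisymmetry, and transitivity of $\sqleq_\V$ on pairs $(u, Y)$ follow from the corresponding properties of $\sqleq_\U$ and $\subseteq$, while $\top$ is handled as a formally added greatest element. That $(\bot_\U, \emptyset)$ is least and $\top$ greatest is immediate. For the least upper bound axiom, fix $(u_1, X_1), (u_2, X_2) \in \V \setminus \{\top\}$. If $u_1 \join_\U u_2 \neq \top$, then $(u_1 \join_\U u_2, X_1 \cup X_2)$ is plainly an upper bound, and any other upper bound $(u, Y)$ satisfies $u \sqgeq_\U u_i$ and $Y \supseteq X_i$ for each $i$, so it dominates it. The one case requiring care is $u_1 \join_\U u_2 = \top$: any pair upper bound $(u, Y)$ would force $u \sqgeq_\U \top$, hence $u = \top$, which is excluded from the pair-portion of $\V$; thus $\top_\V$ is the unique, and therefore least, upper bound. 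Joins in which one coordinate is $\top$ are trivial.

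For the embedding, injectivity of $\iota$ is clear from the definition, and preservation of $\sqleq$, $\bot$, and $\top$ is immediate. Preservation of $\join$ breaks into two cases exactly matching those above: when $u_1 \join_\U u_2 \neq \top$ we have $\iota(u_1) \join_\V \iota(u_2) = (u_1 \join_\U u_2, \emptyset \cup \emptyset) = \iota(u_1 \join_\U u_2)$, and when $u_1 \join_\U u_2 = \top$ both sides equal $\top_\V$. Finally, for the almost-initial-segment property, note that any element of $\V \setminus \iota(\U)$ has the form $(u', X')$ with $X' \neq \emptyset$; if such an element satisfied $(u', X') \sqleq_\V \iota(u) = (u, \emptyset)$ for some $u \neq \top$, then $X' \subseteq \emptyset$, which is impossible. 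Hence the only image-point of $\iota$ that can dominate something outside $\iota(\U)$ is $\top$, which is exactly the almost-initial-segment condition. No step presents a genuine obstacle; the only place that requires attention is the bookkeeping around the collapse of the $\U$-join to $\top$, and this is handled by the case split described above.
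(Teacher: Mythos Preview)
Your proposal is correct and takes essentially the same approach as the paper, which simply states that ``the verification is completely routine, one just checks everything satisfies all the definitions.'' You have merely spelled out the routine checks in detail, including the one nontrivial bookkeeping point (the collapse to $\top$ in the join), which is exactly what the paper leaves to the reader.
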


\begin{proof}
	The verification is completely routine, one just checks everything satisfies all the definitions.
\end{proof}

\begin{nota}
	We will confuse the formal structure $\U[X]$ with anything isomorphic to it, and will imagine that $\U$ is literally an almost-initial-segment of $\U[X]$ instead of, merely, isomorphic to one.
	
	We will drop "$\top$-preserving" from "$\top$-preserving free extension of $\U$ by $X$" for brevity.
\end{nota}

\begin{defn}[Simple extension]
	If $\U$ and $\V$ are USL$^\top$s and $\V$ is an extension $\U$ generated over $\U$ by one element, then we call $\V$ a \textbf{simple extension} of $\U$.
\end{defn}

Observe that if $\U$ is finite, then any simple extension of $\U$ is also finite, as is any free extension by finitely many free generators.

\begin{thm}
	Let $\U$ be a finite USL$^\top$ and $\V$ a finite almost-end-extension. Then $\V$ is a subUSL$^\top$ of a simple almost-end-extension of a free extension of $\U$. Moreover, the free extension can be chosen with only finitely many free generators, and so the two extensions can be chosen to be finite.
\end{thm}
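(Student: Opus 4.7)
The plan is to take $X = \V \setminus \U$, a finite set, and form the free extension $\U[X]$; this will be the free extension appearing in the statement. The crucial observation is the existence of a surjective $\top$-preserving USL homomorphism $\pi \colon \U[X] \twoheadrightarrow \V$ defined by $\pi(u,S) = u \join_\V \Join_{v \in S} v$ and $\pi(\top) = \top$, which restricts to the identity on $\U$. This is well-defined and surjective precisely because $\V$ is generated over $\U$ by $X$ under joins, and it is a USL$^\top$ homomorphism by direct computation.

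I would then build $\mathscr{W}$, the desired simple almost-end-extension of $\U[X]$, as a concrete USL$^\top$. As a set, $\mathscr{W} = \U[X] \sqcup \{[v] : v \in \V \setminus \{\top\}\}$, with the convention $[\top] := \top$. The join on $\mathscr{W}$ extends the joins on $\U[X]$ and on the natural copy of $\V$ given by $v \mapsto [v]$, via the mixed rule
\[
    w \join_{\mathscr{W}} [v] := [\pi(w) \join_\V v] \qquad (w \in \U[X],\ v \in \V).
\]
The technical heart of the proof is checking that $\mathscr{W}$ is a USL$^\top$; associativity and idempotence in the mixed cases reduce to the fact that $\pi$ is a USL$^\top$ homomorphism, and $(\bot_\U,\emptyset)$ and $\top$ are easily seen to be the least and greatest elements.

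Next I would verify the required structural properties of $\mathscr{W}$. Setting $d := [\bot_\V]$, the formula above gives $d \join_{\mathscr{W}} (u,S) = [\pi(u,S)]$, so surjectivity of $\pi$ shows every $[v] \in \mathscr{W} \setminus \U[X]$ equals $d \join_{\mathscr{W}} w$ for some $w \in \U[X]$; hence $\mathscr{W}$ is simple over $\U[X]$ with single generator $d$. For the almost-end property: if $[v] \sqleq_{\mathscr{W}} w$ for $w \in \U[X]$, then $w = [v] \join_{\mathscr{W}} w = [\pi(w) \join_\V v]$, but the right-hand side lies in the $[\cdot]$-part of $\mathscr{W}$ unless it equals $\top$, forcing $w = \top$. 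Finally, the embedding $\phi \colon \V \hookrightarrow \mathscr{W}$ defined by $\phi(u) = (u, \emptyset)$ for $u \in \U$ and $\phi(v) = [v]$ for $v \in \V \setminus \U$ is an injective USL$^\top$ homomorphism, checked by case analysis on whether the arguments lie in $\U$ or $\V \setminus \U$; the almost-initial-segment hypothesis is used here to see that joining any element of $\V \setminus \U$ with any element of $\U \setminus \{\top\}$ can only land in $(\V \setminus \U) \cup \{\top\}$.

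The main obstacle is simply verifying that $\mathscr{W}$ as defined is a valid USL$^\top$: one must check associativity of $\join_{\mathscr{W}}$ across all four combinations of $\U[X]$-elements and $[\cdot]$-elements, and that $[\top]=\top$ and $\bot_\U=\bot_{\mathscr{W}}$ behave correctly. Everything reduces to $\pi$ being a USL$^\top$ homomorphism and the surjection $\pi$ being identity on $\U$, so the remaining work is bookkeeping rather than conceptual.
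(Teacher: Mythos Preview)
Your proposal is correct and is essentially the same construction as the paper's, just presented more directly. The paper forms $\U_1 = \U[X]$ with $X$ in bijection with $\V\setminus\U$, defines the same homomorphism you call $\pi$ (they call it $h$), then builds the simple extension as a \emph{quotient}: they take the free one-generator extension $\U_2' = \U_1[\{b\}]$ and mod out by the congruence that identifies $(x_0,b)$ with $(x_1,b)$ whenever $h(x_0)=h(x_1)$ (and identifies $(x,b)$ with $\top$ when $h(x)=\top$). Your $\mathscr{W}$ is exactly this quotient described concretely as a disjoint union, with your $[v]$ corresponding to the class $[(x,b)]$ for any $x$ with $h(x)=v$, and your generator $d=[\bot_\V]$ corresponding to $[(\bot,b)]$. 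The paper's verification that $\equiv$ is a join-congruence is the same calculation as your verification that $\join_{\mathscr{W}}$ is associative; both reduce to $\pi$ (their $h$) being a homomorphism. Your presentation is a bit cleaner in that it avoids the intermediate object $\U_2'$, at the cost of having to check associativity by hand rather than inheriting it from a quotient.
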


Jockusch and Slaman provide a proof of the above for USLs (without a named greatest element) and the corresponding notion of free and simple extension. Indeed, their proof applies to countable structures (although the extensions chosen must then be countable). Our proof follows theirs and also works in the countable case with little adjustment:

\begin{proof}
	Let $\U$ be an almost-initial-segment of a finite USL $\V$. Enumerate $\V\setminus \U = \{v_1,v_2,\ldots v_n\}$, let $A=\{a_1,\ldots,a_n\}$ be a set of new objects of the same size, and let $g$ be the map taking $v_i$ to $a_i$. We define $\U_1 = \U[A]$ the free extension of $\U$ by $A$, which is finite as both $\U$ and $A$ are finite. We define a map $h:\U_1\rightarrow \V$ by
		\[
			h(x) = \begin{cases}
				x_1 \join_\V \Join_\V \{v_i : g(v_i) = a_i \in A_1\}&\text{if $x = (x_1,A_1)$},\\
					\top & \text{if $x_1= \top$}.
			\end{cases}
		\]
	It is not hard to check that $h$ is a homomorphism from $\U_1$ to $\V$ (i.e., $h$ preserves least and greatest element, joins, and $\sqleq_\U$). Observe that if $x\in \U$ then $h(x) = x$ (considering $\U$ as an almost-initial-segment of $\U[A]$) and  if $v\in \V \setminus \U$ then $h(g(v)) = v$.
	
	Let $b$ be a new element and let $\U_2'$ be the free extension of $\U_1$ by $\{b\}$ (we write $(x,b)$ instead of $(x,\{b\})$). We define an equivalence relation $\equiv$ on $\U_2'$ by $y_0\equiv y_1$ iff $y_0 = y_1$, or
		\begin{align*}
			(\exists x_0,x_1\in \U_1)&[y_0 = (x_0,b)\text{ and } y_1 = (x_1,b) \text{ and }h(x_0) = h(x_1)],\\
			\text{or }& y_0 = \top_{\U_2'} \text{ and } (\exists x_1 \in \U_1)[y_1 = (x_1,b) \text{ and }h(x_1) = \top_\V],\\
			\text{or }& y_1 = \top_{\U_2'} \text{ and } (\exists x_0 \in \U_1)[y_0 = (x_0,b) \text{ and }h(x_0) = \top_\V].
		\end{align*}
	It is not hard to check that this is an equivalence relation, and that if $x_0,x_1\in \U_1$ then $x_0 \equiv x_1$ iff $x_0 =x_1$, indeed, the only element of $\U_1$ with a possibly nontrivial $\equiv$ equivalence class is $\top$.
	
	I claim that, further, $\equiv$ is also a congruence relation for the join structure of $\U_2'$, i.e., if $y_1,y_2,y_1',y_2'\in\U_2'$ and $y_1\equiv y_1', y_2\equiv y_2'$, then $y_1\join_{\U_2'} y_2 \equiv y_1'\join_{\U_2'} y_2'$. To prove the claim we break into cases:
	
	\emph{Case 1, none of the $y$s are  $\top$:} In this case each of the $y$s is of the form $(x,B)$ where $x\in \U_1$ and $B\subseteq \{b\}$, we write $y_1 = (x_1,B_1),y_2 = (x_2,B_2)$ and so on. Now
		\[
			y_1\join_{\U_2'} y_2 = (x_1\join_{\U_1}x_2,B_1\cup B_2)\text{ and }y_1'\join_{\U_2'} y_2' = (x_1'\join_{\U_1}x_2',B_1'\cup B_2').
		\]
	If each of the $B$s are empty, then $y_1=y_1'$ and $y_2= y_2'$ and so the joins displayed above are equal, and so trivially equivalent. Otherwise suppose WLOG that $B_1$ (and hence, $B_1'$) are nonempty.
	
	\emph{Subcase a, neither $x_1\join_{\U_1} x_2 = \top$ nor $x_1\join_{\U_1} x_2 = \top$:} In this case it suffices to show that $h(x_1\join_{\U_1} x_2) = h(x_1'\join_{\U_1} x_2')$. It has been observed already that $h$ is a homomorphism; therefore $h(x_1 \join_{\U_1} x_2) = h(x_1)\join_\V h(x_2)$. Our assumption that $y_1\equiv y_1',y_2\equiv y_2'$ implies that $h(x_1) = h(x_1')$ and $h(x_2) = h(x_2')$. Hence
		\[
			h(x_1 \join_{\U_1} x_2) = h(x_1)\join_\V h(x_2) = h(x_1')\join_\V h(x_2') = h(x_1'\join_{\U_1} x_2')
		\]
	which completes the subcase.
	
	\emph{Subcase b, $x_1 \join_{\U_1} x_2 = \top$:} It suffices to show that either $x_1'\join_{\U_1} x_2' = \top$ or that $h(x_1'\join_{\U_1}x_2')=\top$. If $x_1'\join_{\U_1} x_2' \neq \top$, then, as $h(x_1)=h(x_1'),h(x_2)= h(x_2')$ and $h$ is a homomorphism,
		\[
			h(x_1'\join_{\U_1} x_2')= h(x_1')\join_{\V}h(x_2') = h(x_1)\join_{\V}h(x_2') = h(x_1\join_{\U_1} x_2) = \top.
		\]
	as required. This also completes the case where $x_1'\join_{\U_1}x_2'= \top$.
	
	\emph{Case 2, $y_1 = \top$:} In this case $y_1\join_{\U_2'} y_2 = \top$, and so, it suffices to show that either $x_1'\join_{\U_1} x_2' =\top$ or $h(x_1'\join_{\U_1} x_2')=\top$. If $y_1'=\top$ then we are done. Otherwise $y_1'=(x_1',B'_1)$ and, as $x_1\equiv x_1'$, $h(x_1')=\top$. Therefore
		\[
			h(x_1'\join_{\U_1}x_2') = h(x_1')\join_\V h(x_2)' = \top
		\]
	which concludes this case, as well as the similar cases where one of the $y$s equals $\top$. Therefore, the claim is proved.
	
	With this, we define $\U_2 = \U_2'/\equiv$ and we equip it with $\join_{\U_2}$ defined by the action of $\join_{\U_2'}$ on the equivalence classes. We use this to induce a (suggestively notated) binary relation $\sqleq_{\U_2}$ on $\U_2$ by
		\[
			[y_1] \sqleq_{\U_2} [y_2] \text{ iff } [y_1] \join_{\U_2} [y_2] = [y_2] \text{ iff } y_1\join_{\U_2'} y_2 \equiv y_2.
		\]
	Using an easy yet tedious case analysis as above, it is not hard to see that this is a partial order on $\U_2$ and that $\join_{\U_2}$ is its join operator, and, furthermore, that $[\top_{\U_2'}]$ and $[\bot_{\U_2'}]$ are, respectively, the least and greatest elements. We omit the details.
	
	We want to show that $\U_2$ is a simple almost-end-extension of $\U_1$, or rather, we want to show the natural embedding of $\U_1$ into $\U_2$ given by $x\mapsto [(x,\emptyset)]$ realizes $\U_1$ as an almost-initial-segment and $\U_2$ is simple over this image. We already observed that there are no nontrivial $\equiv$ relationships between elements of $\U_1$, so the map is injective, and it also preserves $\top, \bot_{\U_1}, \sqleq_{\U_1}$ and $\join_{\U_1}$. We want to show it preserves $\not\sqleq_{\U_1}$. Well, if $x_1\not\sqleq_{\U_1} x_2$ then $x_1\join_{\U_1} x_2 \neq x_2$, and as there are no nontrivial $\equiv$ relationships between elements of $\U_1$ then $x_1\join_{\U_1}x_2 \not\equiv x_2$, so we preserve $\not\sqleq_{\U_1}$.
	
	$\U_2$ is clearly simple over this image of this embedding (as it is generated by $[(\bot,b)]$), so it remains to show $\U_1$ is an almost-initial-segment. Suppose $x \in \U_1$ is not $\top$ and $[y]\sqleq_{\U_2}[x]$ for some $y\in \U_2'$. We want to show $y\in \U_1$. By definition we have $y\join_{\U_2'}x \equiv x$. If $y\join_{\U_2'}x = x$, then $y\sqleq_{\U_2'}x$, and, as $\U_1$ is an almost-initial-segment of $\U_2'$, $y\in \U_2$. Otherwise, the $\equiv$ relation is nontrivial, but we have already observed that the $\equiv$ equivalence class of $x$ must be trivial, as $x\in \U_1$ but is not $\top$. So $\U_2$ is an almost-end-extension as required.
	
	So we have natural embeddings $\U$ into $\U_1$ into $\U_2$ both as almost-initial segments. We want to show this embedding of $\U$ into $\U_2$ extends to $\V$. The map we want is
		\[
			f(v) = \begin{cases}
				[\top]&\text{if $v = \top$},\\
				[((v,\emptyset),\emptyset)]&\text{if $v\in\U\setminus\{\top\}$},\\
				[(g(v),b)]&\text{if $v\in \V\setminus \U$}.
			\end{cases}
		\]
	Clearly this map extends the natural one of $\U$ into $\U_2$. We want to check that it is a USL$^\top$ embedding of $\V$. The verification of this is another long case analysis as provided previously. We provide the proof of the preservation of $\join$ when one of the elements is in $\U$ to guide the reader if they wish to check all the details.
	
	We have already shown that the action of $f$ on $\U$ is a USL$^\top$ embedding, so we only need to check that $f(v_1\join_\V v_2) = f(v_1)\join_{\U_2} f(v_2)$ when (WLOG) $v_1\in \V\setminus \U$ and $v_2\in \U$.
	
	\emph{Case 1, $v_1\join_\V v_2 = \top :$} In this case $f(v_1\join_\V v_2)= \top$ and so we want to show that $f(v_1)\join_{\U_2} f(v_2) = \top$. If $v_2 = \top$ we are done, so suppose otherwise. In this case
		\begin{align*}
		f(v_1)\join_{\U_2} f(v_2) & = [(g(v_1),b)]\join_{\U_2} [((v_2,\emptyset),\emptyset)]\\
		& = [(g(v_1),b)\join_{\U_2'}((v_2,\emptyset),\emptyset)]\\
		& = [(g(v_1)\join_{\U_1}(v_2,\emptyset),b)] 
		\end{align*}
	to show $(g(v_1)\join_{\U_1}(v_2,\emptyset),b) \equiv \top$ it suffices to show $h(g(v_1)\join_{\U_1}(v_2,\emptyset)) = \top$. But, of course,
		\begin{align*}
			h(g(v_1)\join_{\U_1}(v_2,\emptyset)) & = h(g(v_1))\join_{\V} h((v_2,\emptyset))\\
			&  =v_1 \join_{\V} v_2 \\
			& =\top,
		\end{align*}
	as required.
	
	\emph{Case 2, $v_1\join_\V v_2 \neq \top$:} As $V$ is an almost-end-extension of $\U$, $v_1\in \V\setminus \U$ and $v_1\join_\V v_2\neq \top$ then this join is strictly a member of $\V$. Hence, we want to show that $[(g(v_1\join_\V v_2),b)]=[(g(v_1),b)]\join_{\U_2} f(v_2)$. If $v_2\in \U\setminus\{\top\}$ (note it can't be equal to $\top$) then $f(v_2) = [((v_2,\emptyset),\emptyset)]$ and so
		\[
			[(g(v_1),b)]\join_{\U_2} f(v_2) = [(g(v_1),b)]\join_{\U_2} [((v_2,\emptyset),\emptyset)] = [(g(v_1)\join_{\U_1} (v_2,\emptyset),b)].
		\]
	Thus, we want to show that $g(v_1\join_\V v_2) \equiv g(v_1)\join_{\U_2'} (v_2,\emptyset)$, but quite similarly to the end of Case 1, $h(g(v_1\join_\V v_2)) = h(g(v_1)\join_\V (v_2,\emptyset)) = v_1 \join_{\U_2'} v_2$.
	
	We omit further details.
\end{proof}
In light of the previous Theorem, we only need to show Theorem \ref{maintheorem2} in the cases where $\V$ is a finite free extension or a simple extension of $\U$. The free extension results can be proved by Cohen forcing: Let $f:\U\rightarrow\DegO$ be any USL$^\top$ embedding and let $X$ be any Cohen real hyperarithmetic in $\O$ meeting every dense subset of Cohen forcing which is hyperarithmetic in $f(u)$ for any $u\in\Lat\setminus\{\top\}$. Then, the columns of $X$ are independent over each degree in the image of $\U\setminus\{\top\}$, and so, if $\V$ is freely generated over $\U$ by $v_1,\ldots,v_n$ then extend $f$ by mapping the generators $v_i \mapsto X^{[i]}$ and using the induced map on joins. This extends $f$ to a USL$^\top$ embedding of $\V$ into $\DegO$ (See Sacks Chapter IV Section 3 \cite{Sacks1990} for an exposition of hyperarithmetical Cohen forcing).

Now we turn to simple end extensions. We make a further reduction to an even more specialized case. The idea is that if a free extension has the fewest possible "positive" facts $x\leq y \join z$ possible, then we reduce the full simple end-extension case to allowing one new positive cupping fact to hold.

\begin{thm}[Bounded Posner-Robinson]
	Let $\a,\b,\c_i,\d_i$ be degrees in $\DegO$ for $i = 1,\ldots n$ and let $\e_j\in\Deg_h(<\O)$ for $j=1,\ldots,m$. Then the following holds
		\begin{align*}
			&\left(\bigwedge_{i=1}^n[\d_i\not\leq_h\c_i \,\&\,(\a\not\leq_h\c_i \text{ or } \d_i\not\leq_h \b\join_h\c_i)] \right)\\\rightarrow (\exists g<\O)&\left[\b\leq \a\join_h g \,\&\bigwedge_{i=1}^n\d_i\not\leq_h \c_i \join_h g \, \&  \bigwedge_{j=1}^m g\not\leq_h \e_j\right]
		\end{align*}
		\label{posnerrobinson}
\end{thm}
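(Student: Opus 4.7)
The plan is to build a representative $G$ of the sought $g$ by hyperarithmetic-in-$\O$ forcing, combining Posner--Robinson style coding with diagonalization, in the spirit of the Turing-degree bounded Posner--Robinson theorem adapted to the hyperarithmetic setting.

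First I would fix hyperarithmetic-in-$\O$ representatives $A,B,C_i,D_i$ of $\a,\b,\c_i,\d_i$, and hyperarithmetic representatives $E_j$ of the $\e_j$ (using $\e_j<_h\O$). Conditions will be finite binary strings together with a count of how many bits of $B$ have been coded. The coding uses an $A$-hyperarithmetic sparse sequence of positions $f(0)<f(1)<\cdots$ and stipulates $G(f(n))=B(n)$, so that $B\leq_h A\oplus G$, and hence $\b\leq_h\a\join_h g$, by design. Valid extensions respect this coding constraint, and the resulting forcing poset is uniformly hyperarithmetic in $\O$.

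Three families of requirements are addressed: (i) $G\neq \Phi_e^{E_j}$, met by extending at some non-coding position (always possible since $E_j$ is hyperarithmetic and infinitely many free positions remain); (ii) $D_i\neq \Phi_e^{C_i\oplus G}$, for each $e$ and $i$; and (iii) the $n$th bit of $B$ is coded by stage $n$, enforced by construction. The heart of the argument is (ii). Given a condition $\sigma$ and a reduction $\Phi_e$, either there is a valid extension $\tau$ and an $n$ with $\Phi_e^{C_i\oplus\tau}(n)\!\downarrow\!\neq D_i(n)$, in which case we extend and the requirement is permanently met; or no such $\tau$ exists. In the latter failure case, I plan to run a two-generic argument comparing pairs of valid extensions that differ only at non-coding positions, and pairs that differ only at coding positions. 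This isolates what information any putative reduction of $D_i$ from $C_i\oplus G$ is actually using, forcing one of the reductions $\d_i\leq_h\c_i$, $\d_i\leq_h\a\join_h\c_i$, or $\d_i\leq_h\b\join_h\c_i$. The first is ruled out by $\d_i\not\leq_h\c_i$; the third by the disjunct $\d_i\not\leq_h\b\join_h\c_i$ whenever it holds; in the middle case, combining with the coding $B\leq_h A\oplus G$ shows that if $\a\leq_h\c_i$ then this collapses to the first case, so $\a\not\leq_h\c_i$ must be the operative disjunct, again yielding a contradiction.

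The main obstacle will be precisely this final case analysis: correctly identifying which of $A$, $B$, or neither provides the essential information used by any putative reduction, in the presence of coding constraints, and matching each alternative cleanly to the hypothesis. Once the failure case is ruled out, the rest is routine: all relevant predicates (validity of extensions, convergence of hyperarithmetic functionals on them) are uniformly hyperarithmetic in $\O$, so the construction can be carried out inside $\O$, giving $g\leq_h\O$, while the diagonalizations in (i) and (ii) secure $g\not\leq_h\e_j$ and $\d_i\not\leq_h\c_i\join_h g$ in the limit.
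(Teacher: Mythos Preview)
Your Cohen-style approach has a genuine gap in the failure case of requirement (ii), and this is exactly where the paper's argument diverges from yours. With your coding scheme, a ``valid'' extension $\tau$ of $\sigma$ is one obeying $\tau(f(k))=B(k)$ at all coding positions $f(k)<|\tau|$; the set of such extensions is determined by $A$ (to locate the positions) together with $B$ (to fix the values). So if no valid extension lets you diagonalize, the search ``find a valid $\tau$ making $\Phi_e^{C_i\oplus\tau}(n)$ converge and output that value'' computes $D_i$ from $C_i\oplus A\oplus B$, i.e.\ you get $\d_i\leq_h \a\join_h\b\join_h\c_i$, which the hypothesis does \emph{not} exclude. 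Your proposed trichotomy $\d_i\leq_h\c_i$ / $\d_i\leq_h\a\join_h\c_i$ / $\d_i\leq_h\b\join_h\c_i$ is not justified by the two-generic comparison you sketch; and even granting it, the middle case with $\a\not\leq_h\c_i$ is not a contradiction --- the disjunct $\a\not\leq_h\c_i$ says nothing about $\d_i\leq_h\a\join_h\c_i$. This is precisely why the paper does \emph{not} build a real by Cohen forcing but instead uses Kumabe--Slaman forcing: the generic is a use-monotone Turing \emph{functional} $\Phi_g$, the coding is ``$\Phi_g(A)=B$'', and one is free to add axioms along oracles that are not initial segments of $A$ (or of the finitely many protected reals). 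That freedom is what makes the failure case collapse to $\d_i\leq_h\c_i$ in the easy case ($\a\not\leq_h\c_i$, with $A$ among the protected reals) and to $\d_i\leq_h\b\join_h\c_i$ in the hard case ($\a\leq_h\c_i$, so $C_i\oplus\Phi_g\equiv_h B\oplus C_i\oplus\Phi_g$), each of which \emph{is} excluded by the hypothesis.

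A second gap: you never address preservation of $\omega_1^{\mathrm{CK}}$ for $C_i\oplus G$. Your diagonalization only ranges over reductions indexed below $\omega_1^{\mathrm{CK}}$, but if $\omega_1^{C_i\oplus G}>\omega_1^{\mathrm{CK}}$ there are hyperarithmetic-in-$(C_i\oplus G)$ reductions you have not touched. The paper handles this by additionally forcing over countable $\omega$-models of ZFC that omit $\omega_1^{\mathrm{CK}}$ yet contain $C_i$ (or $B\oplus C_i$ in the hard case), using a ``forgetting'' trick on the protected reals so that the master sequence is generic over each such model; this is essential and has no counterpart in your outline.
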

Barnes\cite{Barnes2017} has shown the above result without the bounds on the parameters and the generic object $g$. Our version, here, says that if the parameters are sufficiently bounded, then we can effect the forcing construction hyperarithmetically in $\O$.

Suppose the antecedent in the statement of Theorem \ref{posnerrobinson} and let $A\in \a, B\in \b, C_i\in \c_i, D_i\in \d_i$ and $E_j\in \e_j$ be representatives of the degrees. Barnes's construction uses Kumabe-Slaman forcing to produce a sequence of forcing conditions $\{\condition[i]:i\in\omega\}$ where each $\Phi_i$ is a finite use monotone Turing functional, and each $\X_i$ is a finite set of reals. We want to check that $\O$ can produce a generic of the correct kind.

The construction is complicated by the disjunction in the antecedent; If $\a\not\leq_h \c_i$ for the pair $(\c_i,\d_i)$ then we call $i$ an easy case, otherwise we call it a hard case. As there are only finitely many pairs, we can hard code into our algorithm which $i$s are easy and which are hard, so $\O$ need not be able to determine this uniformly.

The coding procedure for cupping $\a$ above $\b$ is to add axioms $(x,y,\alpha)$ to the generic object such that $\alpha\subset A$ and $B(x) = y$ (intuitively, when you plug $A$ into the generic functional $\Phi_g$ you get the characteristic function of $B$). Hence, the coding procedure is uniform and recursive in $A$ and $B$, and, as such, is recursive in $\O$.

For technical reasons, to the easy case we associate the usual Kumabe-Slaman forcing and a forcing language $\mathcal{L}_{\omega_1,\omega}^r(C_i)$, and to the hard case a restricted version of the forcing and the language $\mathcal{L}_{\omega_1,\omega}^r(B\oplus C_i)$. The restricted version of the forcing consists of all the conditions which do not explicitly get the coding procedure wrong. Importantly, the restriction is hyperarithmetic in $A\oplus B$ and so hyperarithmetic in $\O$. Additionally, the languages are recursive in $\omega_1^{C_i}$ and $\omega_1^{B\oplus C_i}$, respectively. As $C_i$ does not compute $D_i$ in the easy case and $B\oplus C_i$ does not compute $D_i$ in the hard case, neither of these are equal to $\O$, hence, their hyperjumps are hyperarithmetically equivalent to $\O$, and so the languages of forcing are hyperarithmetic in $\O$.
 
So, now we need to show that $\O$ can construct a generic sequence of conditions while maintaining the coding. We will construct one sequence $\{p_n = \condition[p_n]\}_{i=0}^\infty$ of Kumabe-Slaman conditions called the \textbf{master sequence}. To keep the complexity of the construction down, we need to make sure that the reals we add to the infinite part of a condition $\X$ are simple. This requires a little extra technical work; roughly speaking, say we are trying to meet a dense set corresponding to some fact we wish to force about $\Phi_g \oplus C_i$ and we have a condition $\condition[p_n]$ corresponding to what we have done so far. We will temporarily "forget" reals $X\in \X_{p_n}$ which are too complicated, to get a modified condition $(\Phi_{p_n},\X_{p_n}')$. We will find a simple extension of this condition which forces whichever fact we are trying to force, and then we will reinsert the "forgotten" reals. There is an obvious worry that this new condition need not refine $\condition[{p_n}]$, so we need to show that we can find an extension of $(\Phi_{p_n},\X_{p_n}')$ which doesn't add new axioms to $\Phi_{p_n}$ which apply to the reals we have forgotten.  This procedure is analogous to Barnes's Lemma 5.4 \cite{Barnes2017}. 

We start with the condition $p_0 = (\emptyset,\emptyset)$. We must decide each sentence of our forcing language. What we do depends on whether we are in the easy or hard case.  Note that, given a real $S$ strictly hyperarithmetic in $\O$ and a finite set $\X$ of reals each strictly hyperarithmetic in $\O$, that $\O$ can uniformly determine which of the $X\in \X$ are hyperarithmetic in $S$. Hence, the "forgetting" procedure mentioned above can be made effective in $\O$. We will preserve throughout that for each condition $p_n$ of our master sequence, each $X\in \X_{p_n}$ will be strictly hyperarithmetic in $\O$ (although their join may not be).

In the easy case Corollary 3.6 in Barnes\cite{Barnes2017} says that, given a sentence $\phi$ and a condition $(\Phi_p,\X_p)$, we can find an extension deciding $\phi$, without messing up the coding, uniformly in $A\oplus C_i^{(\alpha+1)}\oplus \X_p$ (where $\alpha$ is an ordinal less than $\omega_1^{C_i}=\omegaoneck$ which measures the complexity of $\phi$). So suppose we have a condition $p_n = \condition[p_n]$ such that $\Phi_n$ has coded correctly so far and $\X_{p_n}$ does not contain $A$. Let $p' = (\Phi_{p_n},\X_{p_n}')$ where $\X_{p_n}'$ is the intersection of $\X_{p_n}$ with the set of reals which are hyperarithmetic in $C_i$. We can extend Corollary 3.6 of Barnes\cite{Barnes2017} so that we can pick our extension to not add any axioms to any $X\in \X_{p_n} \setminus \X_{p_n}'$ very easily. The proof already does this for an arbitrary $S$ which is not $\Delta_0^{(\alpha+1)}$. The proof for finitely many such $S$ goes through in the same way, and as each $X\in \X_{p_n} \setminus \X_{p_n}'$ is not hyperarithmetic in $C_i$ we can apply this result to those reals. Thus, we produce an extension $q'$ of $p'$ which decides the sentence, such that $q = (\Phi_{q'}, \X_{q'} \cup \X_{p_n})$ extends $p_n$, does not interfere with the coding, and each $X\in \X_{q'}$ is hyperarithmetic in $C$. We can then define $p_{n+1} = q$.

In the hard case we rely on Corollary 4.5 of Barnes\cite{Barnes2017}(with similar modifications as in the easy case) to forget the reals not hyperarithmetic in $B\oplus C_i$, and find an extension deciding a sentence which is compatible with the starting condition.

We also need to diagonalize against cupping $C_i$ above $D_i$ (in the hard case we diagonalize against cupping $B\oplus C_i$ above $D_i$). The relevant results are Corollaries 3.11 and 4.8 in Barnes \cite{Barnes2017}, respectively. Although it is not observed directly in the statements of these Corollaries, it is clear from the proofs that the diagonalizing extension can be found uniformly in (some jump of) the previous condition (where the number of jumps needed is tied nicely to the complexity of the reduction against which we are diagonalizing). Consequently, our trick of temporarily forgetting reals which aren't hyperarithmetic in $C_i$ (or $B\oplus C_i$) allows us to prove that if we can't diagonalize against a reduction by forcing nontotality or for it to be incorrect on some fixed input, then we can hyperarithmetically in $C_i$ (or $B\oplus C_i$) recover what the current condition determines the outputs of this reduction to be. Hence, our assumption that $D_i$ is not hyperarithmetic in $C_i$ (or $B\oplus C_i$) means that any reduction we can't diagonalize against won't turn out to compute $D_i$.

Additionally, we must preserve $\omega_1^{C_i}$ or $\omega_1^{B\oplus C_i}$, depending on the case. For both, we force over nonstandard models $\M_i$ of ZFC, in particular, we force over countable $\omega$-models omitting $\omega_1^{CK}$, yet containing $C_i$ (or $B\oplus C_i$). Harrington, Shore, and Slaman \cite{HarringtonShoreSlaman2016} have shown that we can produce such models which are strictly hyperarithmetic in $\O$. As such, $\O$ can determine which $X\in \X_{p_n}$ are not in $\M_i$, and so can produce a modified condition $p_n'$ which has forgotten each real not appearing in the model. Furthermore, $\O$ can enumerate each element of the model which is a dense subset of (the model's version of) Kumabe-Slaman forcing. Then $\O$ can search for an extension of $p_n'$ in $\M_i$ which meets a dense set appearing in the model. By Lemma 5.4 of Barnes\cite{Barnes2017}, there is such an extension which adds no new computations to any of the reals we have forgotten, and so we can pick such an extension $q$ and extend the master sequence by defining $p_{n+1} = (\Phi_q, \X_{q}\cup \X_p)$, which does not interfere with the coding procedure.

Hence, for each $i$, the master sequence induces a sequence $p_n' = (\Phi_{p_n}, \X_{p_n} \cap (2^\omega)^{\M_i})$ such that each $p_n'\in \M_i$ and the sequence $\{p_n'\}$ is $\M_i$-generic for $\M_i$'s Kumabe-Slaman forcing. Thus, on general grounds, the generic object $\G'$ corresponding to $\{p_n\}$ preserves $\omegaoneck$ and, indeed, as $C_i\in \M_i$ we even have $C_i\oplus \G'$ preserves $\omega_1^{CK}$. Note, though, that the generic object $\G'$ is the same object as $\G$, the generic for the master sequence, and so, $C_i\oplus \G$ preserves $\omegaoneck$ as required.

Finally, we need to avoid ideals below $E_j$. Barnes \cite{Barnes2017} does not do this via genericity, instead he uses a counting argument. However, it is not hard to see that $\O$ can determine which sets are hyperarithmetic in $E_j$, and so attempt to diagonalize against our generic equaling these sets. This is certainly not difficult to do, but we must worry about interfering with our coding procedure.

Suppose we have a condition $(\Phi_p,\X_p)$, a set $Y$, and we are diagonalizing against $\Phi_g= Y$. We can assume that $Y$ is a use monotone Turing functional which is correct for $B$ on input $A$ (see Barnes \cite{Barnes2017} for definitions), as $\Phi_g$ will be such an object. Suppose that every $n$ we want to put into $\Phi_p$ (i.e. $n\notin Y$ but is allowed to enter $\Phi_p$) would interfere with our coding procedure, i.e., is of the form $(x,y,\alpha)$ with $\alpha\subset A$. As $\X_p$ does not contain $A$ (by induction) there is some sufficiently long initial segment of $A$ not an initial segment of any $X\in \X_p$ (and sufficiently long so as to not mess with use monotonicity, or that $\Phi_p$ is a Turing functional, and so on). Let $x$ be the least number such that there is no axiom about $x$ applying to $A$ in $\Phi_p$ (i.e., is the next value we need to code). As $Y$ is a use monotone Turing functional correct for $B$ on input $A$ there is only one axiom $(x,y,\alpha)\in Y$ with $\alpha\subset A$, so all we need to do is put in $(x,y,\alpha')$ where $\alpha'\subset A$ is sufficiently long to be allowed, and is not precisely $\alpha$. This information can all be determined uniformly in $\O$ and so we can diagonalize by meeting appropriate dense sets.

Consequently, we can produce a generic of the correct kind hyperarithmetically in $\O$ as required.

The last thing we need to do is show that Theorem \ref{posnerrobinson} implies we can extend embeddings to any simple almost-end-extension. But this is almost precisely Jockusch and Slaman's Theorem 3.1\cite{JockuschSlaman1993} with very minor changes to allow for the production of USL$^\top$ embeddings instead of USL embeddings (also, we should note that their proof makes use of allowing infinitely many $(\c_i,\d_i)$ and $\e_j$ but when your USLs are finite you only need arbitrary long finite lists).

\pagebreak


\begin{thebibliography}{9}
	
%

	\bibitem{Barnes2017}
	J. S. Barnes\\
	On the decidability of the $\Sigma_2$-theories of the arithmetic and hyperarithmetic degrees as uppersemilattices\\
	\emph{The Journal of Symbolic Logic}\\
	Forthcoming
	
	\bibitem{HarringtonShoreSlaman2016}
	L. A. Harrington, R. A. Shore, T. A. Slaman\\
	$\Sigma_1^1$ in every real in a $\Sigma^1_1$ class of reals is $\Sigma_1^1$\\
	\emph{Computability and Complexity}\\
	A. Day, M. Fellows, N. Greenberg, B. Khoussainov and A. Melnikov eds., Springer-Verlag to appear
	
	\bibitem{JockuschSlaman1993}
	C. G. Jockusch, T. A. Slaman\\
	On the $\Sigma_2$-theory of the upper semilattice of Turing degrees\\
	\emph{The Journal of Symbolic Logic}\\
	Vol. 58, Number 1, Association for Symbolic Logic, 1993
	
	\bibitem{KjoshanssenShore2010}
	B. Kjos-Hanssen, R. A. Shore\\
	Lattice Initial Segments of the Hyperdegrees\\
	\emph{Journal of Symbolic Logic}\\
	Vol. 75, No. 1, Association for Symbolic Logic, 2010 
	
	\bibitem{Lerman1983}
	M. Lerman\\
	Degrees of unsolvability\\
	\emph{Perspectives in Mathematical Logic}\\
	Omega Series, Springer-Verlag, Berlin, Heidelberg, New York, Tokyo, 1983
	
	\bibitem{LermanShore1988}
	M. Lerman, R. A. Shore\\
	Decidability and invariant classes for degree structures\\
	\emph{Transactions of the American Mathematical Society}\\
	Vol 310, No. 2, AMS, 1988
	
	\bibitem{Sacks1990}
	G. E. Sacks\\
	Higher Recursion Theory\\
	\emph{Springer-Verlag}\\
	1990
	


	
\end{thebibliography}
\end{document}